      \def\sL{{\mathfrak L}}
   \def\sN{{\mathfrak N}}
\def\bA{{\mathbb A}}      \def\dC{{\mathbb C}}
      \def\dR{{\mathbb R}}
\def\cD{{\mathcal D}}      
   \def\calH{{\mathcal H}}
\DeclareMathOperator{\Ext}{Ext}
\newcommand{\ti}{\tilde  }
\def\bm\chi{\mbox{\boldmath$\chi$}}
\def\half{{\frac{1}{2}}}
\def\RE{{\rm Re\,}}
\def\IM{{\rm Im\,}}
\def\dim{{\rm dim\,}}
\def\cmr{{\dC \setminus \dR}}
\newtheorem{theorem}{Theorem}[section]
\newtheorem{corollary}[theorem]{Corollary}
\theoremstyle{remark}
\newtheorem{remark}[theorem]{Remark}
\theoremstyle{definition}
\newtheorem{definition}[theorem]{Definition}
\numberwithin{equation}{section}
\begin{document}

\title[Stieltjes like functions and systems with Schr\"odinger operator]
{Stieltjes like functions and inverse problems for systems with Schr\"odinger operator}

\author[S.V.~Belyi]{Sergey Belyi}
\address{Department of Mathematics\\
Troy State University\\
Troy, AL 36082, USA} \email{sbelyi@trojan.troyst.edu}

\author[E.R.~Tsekanovski\u{\i}]{Eduard Tsekanovskii}
\address{Department of Mathematics\\
Niagara University, NY 14109 \\
USA} \email{tsekanov@niagara.edu}
\date{\today}

\keywords{Operator colligation, conservative and impedance system,
transfer (characteristic) function}

\subjclass[2000]{Primary 47A10, 47B44; Secondary 46E20, 46F05}

\begin{abstract}
A class of scalar Stieltjes like  functions is realized as
linear-fractio\-nal transformations of transfer functions of
conservative systems based  on a Schr\"odin\-ger operator $T_h$ in
$L_2[a,+\infty)$ with a non-selfadjoint boundary condition. In
particular it is shown that any Stieltjes  function of this class
can be realized in the unique way so that the main operator $\bA$ of
a system  is an accretive $(*)$-extension of a Schr\"odinger
operator $T_h$. We derive formulas that restore the system uniquely
and allow to find the exact value of a non-real parameter $h$ in the
definition of $T_h$ as well as a real parameter $\mu$ that appears
in the construction of the elements of the realizing system. An
elaborate investigation of these formulas shows the dynamics of the
restored parameters $h$ and $\mu$ in terms of the changing free term
$\gamma$ from the integral representation of the realizable
function. It turns out that the parametric equations for the
restored parameter $h$ represent different circles whose centers and
radii are determined by the realizable function. Similarly, the
behavior of the restored parameter $\mu$ are described by
hyperbolas.

\end{abstract}

\maketitle

\section{Introduction}\label{s-1}

Realizations of different classes of holomorphic operator-valued
functions in the open right half-plane, unit circle, and upper
half-plane, as well as inverse spectral problems,  play an important
role in the spectral analysis of non-self-adjoint operators,
interpolation problems, and system theory. The literature on
realization theory is too extensive to be discussed exhaustively in
this note. We refer, however, to \cite{AlGKS}, \cite{AlTs1},
\cite{AroDym1}, \cite{AroDym2}, \cite{BallSt1}, \cite{BT3},
\cite{BT4}, \cite{Hru}, \cite{St2}, \cite{Yurko} and the literature
therein. A class of Herglotz-Nevanlinna functions is a rich source
for many types of realization problems. An operator-valued function
$V(z)$ acting on a finite-dimensional Hilbert space $E$ belongs to
the class of operator-valued Herglotz-Nevanlinna functions if it is
holomorphic on $\cmr$, if it is symmetric with respect to the real
axis, i.e., $V(z)^*=V(\bar{z})$, $z\in \cmr$, and if it satisfies
the positivity condition
\[
 \IM V(z)\geq 0, \quad z\in \dC_+.
\]
It is well known (see e.g. \cite{GT}, \cite{GMKT}) that
operator-valued Herglotz-Nevanlinna functions admit the following
integral representation:
\begin{equation}
\label{nev0}
 V(z)=Q+Lz+\int_{\dR}
      \left( \frac{1}{t-z}-\frac{t}{1+t^2}\right)\, dG(t),
\quad
    z \in \cmr,
\end{equation}
where $Q=Q^*$, $L\geq 0$, and $G(t)$ is a nondecreasing
operator-valued function on $\dR$ with values in the class of
nonnegative operators in $E$ such that
\begin{equation}
\label{int0}
 \int_{\dR} \frac{\left( dG(t)x,x \right)_E}{1+t^2} <\infty,
 \quad x \in E.
\end{equation}
The realization of a selected class of Herglotz-Nevanlinna functions
is provided by a linear  conservative  system $\Theta$ of the form
\begin{equation}\label{Lyv}
\left\{   \begin{array}{l}
          (\bA-zI)x=KJ\varphi_-  \\
          \varphi_+=\varphi_- -2iK^* x
          \end{array}
\right.
\end{equation}
 or
\begin{equation}
\label{col0}
 \Theta =
\left(%
\begin{array}{ccc}
  \bA    & K & J \\
   \calH_+\subset\calH\subset\calH_- &  & E \\
\end{array}%
\right).
\end{equation}
In this system $\bA$, the \textit{main operator} of the system, is a
so-called ($*$)-extension, which is a bounded linear operator from
$\calH_+$ into $\calH_-$, where $\calH_+\subset\calH\subset\calH_-$
is a rigged Hilbert space. Moreover, $K$ is a bounded linear
operator from the finite-dimensional Hilbert space $E$ into
$\calH_-$, while $J=J^*=J^{-1}$ is acting on $E$, are such that
$\IM\bA=KJK^*$. Also, $\varphi_-\in E$ is an input vector,
$\varphi_+\in E$ is an output vector, and $x\in \calH_+$ is a vector
of the state space of the system $\Theta$. The system described by
\eqref{Lyv}-\eqref{col0} is called a rigged canonical system of the
Liv\v{s}ic type  \cite{Lv2} or the Brodski\u{\i}-Liv\v{s}ic rigged
operator colligation, cf., e.g. \cite{BT3}, \cite{BT4}, \cite{Br}.
The operator-valued function
\begin{equation}
\label{W1}
 W_\Theta(z)=I-2iK^*(\bA-zI)^{-1}KJ
\end{equation}
is a transfer function (or characteristic function) of the system
$\Theta$. It was shown in \cite{BT3} that an operator-valued
function $V(z)$ acting on a Hilbert space $E$ of the form
\eqref{nev0} can be represented and realized in the form
\begin{equation}
\label{real2}
 V(z)=i[W_\Theta(z)+I]^{-1}[W_\Theta(z)-I]
 =K^*(\bA_{R}-zI)^{-1}K,
\end{equation}
where $W_\Theta(z)$ is a transfer function  of some canonical
scattering ($J=I$) system $\Theta$, and where the ``\textit{real
part}'' $\bA_R=\half(\bA+\bA^*)$ of $\bA$ satisfies $\bA_R \supset
A$ if and only if the function $V(z)$ in \eqref{nev0} satisfies the
following two conditions:
\begin{equation}
\label{cond0} \left\{
\begin{array}{l}
 L = 0, \\
 Qx=\int_{\dR} \frac{t}{1+t^2}\, dG(t)x
   \quad \mbox{when} \quad
    \int_{\dR} \left(dG(t)x,x\right)_E <\infty.
\end{array}
\right.
\end{equation}

In the current paper we are going to focus on an important subclass
of  Herglotz-Nevanlinna functions, the so called Stieltjes like
functions that also includes Stieltjes functions.
In Section \ref{s-4} we specify a
subclass of realizable Stieltjes operator-functions and show that
any member of this subclass can be realized by a system of the form
\eqref{col0} whose main operator $\bA$   is accretive.

In  Section \ref{s-5} we introduce a  class of  Stieltjes like
scalar functions.  Then we rely on the general realization results
developed in Section \ref{s-4} (see also
\cite{DoTs}) to 
restore a system $\Theta$ of the form \eqref{col0} containing the
Schr\"odinger operator in $L_2[a,+\infty)$ with non-self-adjoint
boundary conditions
\begin{equation*}
 \left\{ \begin{array}{l}
 T_h y=-y^{\prime\prime}+q(x)y \\
 y^{\prime}(a)=hy(a) \\
 \end{array} \right., \quad \left(q(x)=\overline{q(x)},\,\IM h\ne0\right).
\end{equation*}
We show that if a non-decreasing function $\sigma(t)$ is the
spectral distribution function of positive self-adjoint boundary
value problem
\begin{equation*}
 \left\{ \begin{array}{l}
 A_\theta y=-y^{\prime\prime}+q(x)y \\
 y^{\prime}(a)=\theta y(a)
 \end{array} \right.
\end{equation*}
and satisfies conditions
\begin{equation*}
\int\limits_0^{\infty}d\sigma(t)=\infty,\quad
\int\limits_0^{\infty}\frac{d\sigma(t)}{1+t}<\infty,
\end{equation*}
then for every real $\gamma$ a Stieltjes like function
$$
V(z)=\gamma+\int\limits_0^{\infty}\frac{d\sigma(t)}{t-z}
$$
can be realized in the unique way as a $V_\Theta(z)$ function of a
rigged canonical system $\Theta$ containing some Schr\"odinger
operator $T_h$. In particular, it is shown that for every
$\gamma\ge0$ a Stieltjes function $V(z)$ with integral
representation above can be realized by a system $\Theta$ whose main
operator  $\bA$  is an accretive $(*)$-extension of a Schr\"odinger
operator $T_h$.

On top of the general realization results, Section \ref{s-5}
provides the reader with formulas that allow to find the exact value
of a non-real parameter $h$ in the definition of $T_h$   of the
realizing system  $\Theta$. Similar investigation is  presented in
Section \ref{s-6} to describe the real  parameter $\mu$ that appears
in the construction of the elements of the realizing system. A
detailed study of these formulas shows the dynamics of the restored
parameters $h$ and $\mu$ in terms of a changing free term $\gamma$
in the integral representation of $V(z)$ above. It will be shown and
graphically presented that the parametric equations for the restored
parameter $h$ represent different circles whose centers and radii
are completely determined by the function $V(z)$. Similarly, the
behavior of the restored parameter $\mu$ are described by
hyperbolas.

\section{Some preliminaries}\label{s-2}

For a pair of Hilbert spaces $\calH_1$, $\calH_2$ we denote by
$[\calH_1,\calH_2]$ the set of all bounded linear operators from
$\calH_1$ to $\calH_2$. Let $A$ be a closed, densely defined,
symmetric operator in a Hilbert space $\calH$ with inner product
$(f,g),f,g\in\calH$. Consider the rigged Hilbert space
$$\calH_+\subset\calH\subset\calH_- ,$$
where $\calH_+ =D(A^*)$ and
$$(f,g)_+ =(f,g)+(A^* f, A^*g),\;\;f,g \in D(A^*).$$
 Note that identifying the space
conjugate to $\calH_\pm$ with $\calH_\mp$, we get that if
$\bA\in[\calH_+,\calH_-]$ then $\bA^*\in[\calH_+,\calH_-].$
\begin{definition}
 An operator $\bA\in[\calH_+,\calH_-]$ is called a self-adjoint
bi-extension of a symmetric operator $A$ if $\bA=\bA^*$, $\bA
\supset A$, and the operator
$$\widehat A f = \bA f,\; f\in D(\widehat A)=\{f\in\calH_+:\bA f \in\calH\}$$
is self-adjoint in $\calH$.
\end{definition}
The operator $\widehat A$ in the above definition is called a
\textit{quasi-kernel} of a self-adjoint bi-extension $\bA$ (see
\cite{TSh1}) .
\begin{definition} An operator $\bA\in[\calH_+,\calH_-]$ is called a ($*$)-extension
(or correct bi-extension) of an operator $T$ (with non-empty set
$\rho(T)$ of regular points) if
$$\bA \supset T\supset A, \bA^* \supset T^* \supset A$$
and the operator $\bA_R=\frac {1}{2}(\bA+\bA^*)$ is a self-adjoint
bi-extension of an operator $A$.
\end{definition}

The existence, description, and analog of von Neumann's formulas for
self-adjoint bi-extensions and ($*$)-extensions were discussed in
\cite{TSh1} (see also \cite{arl2}, \cite{ArTs79}, \cite{BT3}). For
instance, if $\Phi$ is an isometric  operator from the defect
subspace $\sN_i$ of the symmetric operator $A$ onto the defect
subspace $\sN_{-i}$, then the formulas below establish a one-to one
correspondence between ($*$)-extensions of an operator $T$ and
$\Phi$
\begin{equation}
\label{111} \bA f=A^*f+iR(\Phi-I)x,\; \bA^* f=A^*f+iR(\Phi-I)y,
\end{equation}
where $x,y \in \sN_i$ are uniquely determined from the conditions
$$f-(\Phi+I)x\in D(T),\;f-(\Phi+I)y\in D(T^*)$$
and $R$ is the Riesz-Berezanskii operator of the triplet $\calH_+
\subset \calH \subset \calH_-$ that maps $\calH_+$ isometrically
onto $\calH_-$ (see \cite{TSh1}). If the symmetric operator $A$ has
deficiency indices $(n,n)$, then formulas \eqref{111} can be
rewritten in the following form
\begin{equation}
\label{112} \bA f = A^* f+ \sum\limits_{k = 1}^n {\Delta _k
(f)V_k},\quad \bA^* f = A^* f+ \sum\limits_{k = 1}^n {\delta _k
(f)V_k},
\end{equation}
where $\{V_j \}_1^n \in \calH_-$ is a basis in the subspace
$R(\Phi-I)\sN_i$, and $\{\Delta_k\}_1^n$, $\{\delta_k\}_1^n$, are
bounded linear functionals on $\calH_+$ with the properties
\begin{equation}
\label{113} \Delta_k(f)=0, \;\;\forall f \in D(T),\quad
\delta_k(f)=0,\; \;\forall f \in D(T^*).
\end{equation}

Let $\calH=L_2[a,+\infty)$ and $l(y)=-y^{\prime\prime}+q(x)y$ where
$q$ is a real locally summable function. Suppose that the symmetric
operator
\begin{equation}
\label{128}
 \left\{ \begin{array}{l}
 Ay=-y^{\prime\prime}+q(x)y \\
 y(a)=y^{\prime}(a)=0 \\
 \end{array} \right.
\end{equation}
has deficiency indices (1,1). Let $D^*$ be the set of functions
locally absolutely continuous together with their first derivatives
such that $l(y) \in L_2[a,+\infty)$. Consider $\calH_+=D(A^*)=D^*$
with the scalar product
$$(y,z)_+=\int_{a}^{\infty}\left(y(x)\overline{z(x)}+l(y)\overline{l(z)}
\right)dx,\;\; y,\;z \in D^*.$$ Let
$$\calH_+ \subset L_2[a,+\infty) \subset \calH_-$$
be the corresponding triplet of Hilbert spaces. Consider  operators
\begin{equation}
\label{131}
 \left\{ \begin{array}{l}
 T_hy=l(y)=-y^{\prime\prime}+q(x)y \\
 hy(a)-y^{\prime}(a)=0 \\
 \end{array} \right.
           ,\;\;  \left\{ \begin{array}{l}
 T^*_hy=l(y)=-y^{\prime\prime}+q(x)y \\
 \overline{h}y(a)-y^{\prime}(a)=0 \\
 \end{array} \right.,
\end{equation}
$$\left\{ \begin{array}{l}
 \widehat A y=l(y)=-y^{\prime\prime}+q(x)y \\
 \mu y(a)-y^{\prime}(a)=0 \\
 \end{array} \right.,\;\; \IM \mu =0. $$
It is well known \cite{AG93} that $\widehat A=\widehat{A^*}$. The
following theorem was proved in \cite{ArTs0}.
\begin{theorem}
\label{10} The set of all ($*$)-extensions of a non-self-adjoint
Schr\"odinger operator $T_h$ of the form \eqref{131} in
$L_2[a,+\infty)$ can be represented in the form
\begin{equation}
\label{137}
\begin{split}
&\bA y=-y^{\prime\prime}+q(x)y-\frac {1}{\mu-h}\,[y^{\prime}(a)-
hy(a)]\,[\mu \delta (x-a)+\delta^{\prime}(x-a)], \\
&\bA^* y=-y^{\prime\prime}+q(x)y-\frac {1}{\mu-\overline h}\,
[y^{\prime}(a)-\overline hy(a)]\,[\mu \delta
(x-a)+\delta^{\prime}(x-a)].
\end{split}
\end{equation}
In addition, the formulas \eqref{137} establish a one-to-one
correspondence between the set of all ($*$)-extensions of a
Schr\"odinger operator $T_h$ of the form \eqref{131} and all real
numbers $\mu \in [-\infty,+\infty]$.
\end{theorem}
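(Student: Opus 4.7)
The plan is to specialize the general description \eqref{112} of $(*)$-extensions to the present setting. Since the symmetric operator $A$ in \eqref{128} has deficiency indices $(1,1)$, the sum in \eqref{112} reduces to a single term, so I only need to identify three objects: a vector $V\in\calH_-$ that spans $R(\Phi-I)\sN_i$, and two bounded linear functionals $\Delta,\delta$ on $\calH_+=D^*$ satisfying \eqref{113}, i.e., $\Delta$ vanishing on $D(T_h)$ and $\delta$ vanishing on $D(T_h^*)$.

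First I would pin down the functionals. The subspace $D(T_h)=\{y\in D^*:y'(a)-h\,y(a)=0\}$ has codimension one in $D^*$, so any bounded linear functional on $\calH_+$ annihilating $D(T_h)$ is a scalar multiple of $y\mapsto y'(a)-h\,y(a)$; likewise $\delta(y)=c'\,[y'(a)-\overline{h}\,y(a)]$. Next I would identify $V$. Because $\bA-A^*$ acts on $D^*$ through the single boundary functional and produces a vector in $\calH_-$ supported only at $x=a$, the natural ambient space of such ``boundary distributions'' is two-dimensional, spanned by $\delta(x-a)$ and $\delta'(x-a)$, both of which embed into $\calH_-$ via the Riesz--Berezanskii operator of the triplet. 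Thus $V=\alpha\,\delta(x-a)+\beta\,\delta'(x-a)$ for some scalars. The requirement that the quasi-kernel of $\bA_R=\tfrac12(\bA+\bA^*)$ be a self-adjoint extension of $A$, combined with the known parametrization of such extensions by the boundary condition $\mu\,y(a)-y'(a)=0$, $\mu\in[-\infty,+\infty]$, pins down the ratio $\alpha:\beta=\mu:1$. Finally, the normalization constants $c,c'$ are determined by imposing $\bA\supset T_h$ and $\bA^*\supset T_h^*$: evaluating $\bA$ on a function in $D(\widehat A)\setminus D(T_h)$ and matching with $l(y)$ forces $c=-1/(\mu-h)$ and $c'=-1/(\mu-\overline{h})$, producing the formulas \eqref{137}.

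The one-to-one correspondence is then immediate: as $\mu$ runs over $[-\infty,+\infty]$, the operators $\widehat A$ exhaust the self-adjoint extensions of $A$, hence the quasi-kernels of $\bA_R$, hence (via \eqref{112}) the $(*)$-extensions of $T_h$; distinct $\mu$ clearly yield distinct extensions since the quasi-kernel is uniquely determined by $\bA_R$. The principal technical obstacle is the distribution-theoretic bookkeeping: verifying that $\delta(x-a)$ and $\delta'(x-a)$ genuinely belong to $\calH_-=(\calH_+)^*$, computing their images under the Riesz--Berezanskii operator so that the identification with $iR(\Phi-I)x$ in \eqref{111} can be made explicit, and checking that the resulting expression \eqref{137} defines a bounded operator from $\calH_+$ into $\calH_-$ whose restriction to $D(T_h)$ agrees with $T_h$.
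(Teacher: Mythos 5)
You should first note that the paper does not prove Theorem \ref{10} at all --- it is quoted from \cite{ArTs0} --- so there is no internal proof to compare against; your proposal must stand on its own. Its architecture is the right one: specialize \eqref{112} to deficiency indices $(1,1)$, identify $\Delta$ and $\delta$ up to scalars as the boundary functionals $y\mapsto y'(a)-hy(a)$ and $y\mapsto y'(a)-\overline{h}\,y(a)$ by the codimension-one argument, and locate $V$ in $\spn\{\delta(x-a),\delta'(x-a)\}$. However, two of your steps fail as written.

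First, the mechanism you use to pin down $\alpha:\beta=\mu:1$ rests on a false identification: the quasi-kernel of $\bA_R$ for the extension \eqref{137} is \emph{not} the self-adjoint extension with boundary condition $\mu y(a)-y'(a)=0$. Since $D(\widehat A)=\{y\in\calH_+:\Delta(y)+\delta(y)=0\}$, a direct computation (recorded in the paper itself as \eqref{e8-223}--\eqref{e8-224}) gives the boundary condition $y'(a)=\eta y(a)$ with
$$\eta=\frac{\mu\,\RE h-|h|^2}{\mu-\RE h},$$
and $\eta=\mu$ would force $(\mu-\RE h)^2+(\IM h)^2=0$, which is impossible. So matching the quasi-kernel against $\mu y(a)-y'(a)=0$ does not yield $V=\mu\delta+\delta'$. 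What self-adjointness of the quasi-kernel actually gives you is only that $\alpha/\beta$ is real or infinite, i.e., that $V$ may be \emph{labeled} by $\mu=\alpha/\beta\in[-\infty,+\infty]$; the one-to-one correspondence with self-adjoint extensions then survives because $\mu\mapsto\eta$ is a M\"obius bijection of $[-\infty,+\infty]$ (its determinant is $(\IM h)^2\ne0$), but that is a different argument from the one you wrote. Second, the normalization $c=-1/(\mu-h)$ is not forced by $\bA\supset T_h$ --- that inclusion holds for \emph{every} $c$ because $\Delta$ vanishes on $D(T_h)$ --- nor by evaluating $\bA$ on $D(\widehat A)\setminus D(T_h)$, where $\bA y\ne l(y)$ (only $\bA_R\,y=l(y)$ there). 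It is forced by requiring that the genuine adjoint of $\bA$ contain $T_h^*$ with the same vector $V$: Green's formula gives $(l(f),g)-(f,l(g))=f'(a)\overline{g(a)}-f(a)\overline{g'(a)}$, and demanding that the boundary terms in $(\bA f,g)-(f,l(g))$ vanish for all $g$ with $g'(a)=\overline h\,g(a)$ yields $1+c(\mu-h)=0$, together with $c'=\overline c$. With these two repairs the outline goes through; the remaining technical work you correctly flag (boundedness of the trace functionals on $\calH_+$, membership of $\delta(x-a)$ and $\delta'(x-a)$ in $\calH_-$, and the identification of $R(\Phi-I)\sN_i$ with a line in their span) is where the actual content of \cite{ArTs0} lies.
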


\begin{definition} An operator $T$ with the domain $D(T)$ and $\rho(T)\ne\emptyset$ acting on a Hilbert space
$\calH$ is called \textit{accretive} if
$$\RE (Tf,f)\geq0,\; \;\forall f\in D(T).$$
\end{definition}

\begin{definition}  An accretive operator $T$ is called \cite{Ka}
\textit{$\alpha$-sectorial}  if there exists a value of
$\alpha\in(0,\pi/2)$ such that
\begin{equation*}\label{e8-29}
    \cot\alpha\,|\IM(Tf,f)|\le\RE(Tf,f),\qquad f\in\cD(T).
\end{equation*}
\end{definition}
An accretive operator is called \textit{extremal accretive} if it is
not $\alpha$-sectorial for any $\alpha\in(0,\pi/2)$.

Consider the symmetric operator $A$ of the form \eqref{128} with
defect indices (1,1), generated by the differential operation
$l(y)=-y^{\prime\prime}+q(x)y$. Let $\varphi_k(x,\lambda) (k=1,2)$
be the solutions of the following Cauchy problems:

\centerline{$\left\{ \begin{array}{l}
 l(\varphi_1)=\lambda \varphi_1 \\
 \varphi_1(a,\lambda)=0 \\
 \varphi'_1(a,\lambda)=1 \\
 \end{array} \right., $
$\left\{ \begin{array}{l}
 l(\varphi_2)=\lambda \varphi_2 \\
 \varphi_2(a,\lambda)=-1 \\
 \varphi'_2(a,\lambda)=0 \\
 \end{array} \right.. $}

It is well known \cite{AG93} that there exists a function
$m_\infty(\lambda)$ (called the  Weyl-Titchmarsh function) for which
$$\varphi(x,\lambda)=\varphi_2(x,\lambda)+m_\infty(\lambda)
\varphi_1(x,\lambda)$$ belongs to $L_2[a,+\infty)$.

Suppose that the symmetric operator $A$ of the form \eqref{128} with
deficiency indices (1,1) is nonnegative, i.e., $(Af,f) \geq 0$ for
all $f \in D(A))$. It was shown in \cite{Tse} that the Schr\"odinger
operator $T_h$ of the form \eqref{131} is accretive if and only if
\begin{equation}
\label{138} \RE h\geq-m_\infty(-0).
\end{equation}
For real $h$ such that $h\geq-m_\infty(-0)$ we get a description of
all nonnegative self-adjoint extensions of an operator $A$. For
$h=-m_\infty(-0)$ the corresponding operator
\begin{equation}
\label{139}
 \left\{ \begin{array}{l}
 A_K\,y=-y^{\prime\prime}+q(x)y \\
 y^{\prime}(a)+m_\infty(-0)y(a)=0 \\
 \end{array} \right.
\end{equation}
is the  Kre\u{\i}n-von Neumann extension of $A$ and for $h=+\infty$
the corresponding operator
\begin{equation}
\label{140}
 \left\{ \begin{array}{l}
 A_F\,y=-y^{\prime\prime}+q(x)y \\
 y(a)=0 \\
 \end{array} \right.
\end{equation}
is the Friedrichs extension of $A$ (see \cite{Tse}, \cite{ArTs0}).


\section{Rigged canonical systems with Schr\"odinger
operator}\label{s-3}

Let $\bA$ be ($*$) - extension of an operator $T$, i.e.,
$$\bA \supset T\supset A,\quad \bA^*\supset T^*\supset A$$
where $A$ is a symmetric operator with deficiency indices ($n,n$)
and $D(A)=D(T)\cap D(T^*)$. In what follows we will only consider
the case when the symmetric operator $A$ has dense domain, i.e.,
$\overline{\cD(A)}=\calH$.
\begin{definition}
A system of equations
\[
\left\{   \begin{array}{l}
          (\bA-zI)x=KJ\varphi_-  \\
          \varphi_+=\varphi_- -2iK^* x
          \end{array}
\right.,
\]
 or an
array
\begin{equation}\label{e6-3-2}
\Theta= \begin{pmatrix} \bA&K&\ J\cr \calH_+ \subset \calH \subset
\calH_-& &E\cr \end{pmatrix}
\end{equation}
 is called a \textit{rigged canonical system of the Livsic type}  or
 the
 \textit{Brodski\u{\i}-Livsic rigged operator colligation} if:

1) $E$ is a finite-dimensional Hilbert space with scalar product
$(\cdot,\cdot)_{E}$ and the operator $J$ in this space satisfies the
conditions $J=J^*=J^{-1}$,

2) $K\in [E,\calH_-]$,

3) $\IM\bA=KJK^*,$ where $K^*\in [\calH_+,E]$ is the adjoint of $K$.
\end{definition}
In the definition above   $\varphi_- \in E$ stands for an input
vector, $\varphi_+ \in E$ is an output vector, and $x$ is a state
space vector in $\calH$.
 An operator
$\bA$ is called a \textit{main operator} of the system $\Theta$, $J$
is a \textit{direction operator}, and $K$ is  a \textit{channel
operator}. An operator-valued function
\begin{equation}
\label{142}
   W_\Theta(\lambda)=I-2iK^*(\bA -\lambda I)^{-1}KJ
\end{equation}
defined on the set $\rho(T)$ of regular points of an operator $T$ is
called the \textit{transfer function} (\textit{characteristic
 function}) of the system $\Theta$, i.e.,
$\varphi_{+}= W_\Theta(\lambda)\varphi_{-}$. \noindent It is known
\cite{Tse},\cite{TSh1} that any $(*)$-extension $\bA$ of an operator
$T$ ($A^* \supset T \supset A)$, where $A$ is a symmetric operator
with deficiency indices $(n,n)$ $(n<\infty)$, $D(A)=D(T)\cap
D(T^*)$, can be included as a main operator of some rigged canonical
system with $\dim E<\infty$ and invertible channel operator $K$.

It was also established \cite{Tse}, \cite{TSh1} that
\begin{equation}
\label{143} V_\Theta(\lambda)=K^*(\RE \bA - \lambda I)^{-1}K
\end{equation}
is a Herglotz-Nevanlinna operator-valued function acting on a
Hilbert space $E$, satisfying the following relation for
$\lambda\in\rho(T),\; \IM\lambda\ne 0$
\begin{equation}
\label{144}
V_\Theta(\lambda)=i[W_\Theta(\lambda)-I][W_\Theta(\lambda)+I]^{-1}J.
\end{equation}
Alternatively,
\begin{equation}\label{e5-62}
\begin{aligned}
W_\Theta(\lambda)&=(I+iV_\Theta(\lambda)J)^{-1}(I-iV_\Theta(\lambda)J)\\
&=(I-iV_\Theta(\lambda)J)(I+iV_\Theta(\lambda)J)^{-1}.
\end{aligned}
\end{equation}
Let us recall  (see \cite{TSh1}, \cite{ArTs0}) that a  symmetric
operator with dense domain $\cD(A)$ is called \textit{prime} if
there is no reducing, nontrivial invariant subspace on which $A$
induces a self-adjoint operator. It was established in \cite{T87}
that a symmetric operator $A$ is prime if and only if
\begin{equation}\label{prime_op}
\mathop{c.l.s.}\limits_{\lambda\ne\overline\lambda}\sN_\lambda=\calH.
\end{equation}
 We call a
rigged canonical system of the form \eqref{e6-3-2} \textit{prime} if
$$\mathop{c.l.s.}\limits_{\lambda\ne\bar\lambda,\,\lambda\in\rho(T)}\sN_\lambda=\calH.$$
One easily verifies that if system $\Theta$ is prime, then a
symmetric operator $A$ of the system is  prime as well.


The following theorem \cite{ArTs0} establishes the connection
between two rigged canonical systems with equal transfer functions.
\begin{theorem}\label{12}
Let $\Theta_1= \begin{pmatrix} \bA_1&K_1&J\cr \calH_{+1} \subset
\calH_1 \subset \calH_{-1}& &E\cr \end{pmatrix} $
and \\
$\Theta_2= \begin{pmatrix} \bA_2&K_2&J\cr \calH_{+2} \subset \calH_2
\subset \calH_{-2}& &E\cr \end{pmatrix} $ be two prime rigged
canonical systems of the  Livsic type with
\begin{equation}
\label{165}
\begin{split}
&\bA_1 \supset T_1 \supset A_1,\quad
\bA_1^* \supset T_1^* \supset A_1, \\
&\bA_2 \supset T_2 \supset A_2,\quad \bA_2^* \supset T_2^* \supset
A_2,
\end{split}
\end{equation}
 and such that $A_1$ and $A_2$ have finite and equal defect
indices.

If
\begin{equation}
\label{166} W_{\Theta_1}(\lambda)=W_{\Theta_2}(\lambda),
\end{equation}
then there exists an isometric operator $U$ from $\calH_1$ onto
$\calH_2$ such that $U_+=U|_{\calH_{+1}}$ is an isometry\footnote{It
was shown in \cite{ArTs0} that the operator $U_+$ defined this way
is an isometry from $\calH_{+1}$ onto $\calH_{+2}$. It is also shown
there that the isometric operator
$U^*:\calH_{+2}\rightarrow\calH_{+1}$ uniquely defines operator
$U_-=(U^*)^*:\calH_{-1}\rightarrow\calH_{-2}$.} from $\calH_{+1}$
onto $\calH_{+2}$, $U_-^*=U_+^*$ is an isometry from $\calH_{-1}$
onto $\calH_{-2}$, and
\begin{equation}
\label{167} UT_1=T_2U,\quad \bA_2=U_-\bA_1U_+^{-1},\quad U_-K_1=K_2.
\end{equation}
\end{theorem}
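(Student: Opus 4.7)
The strategy is the classical Brodski\u{\i}--Liv\v{s}ic model-uniqueness argument adapted to the rigged setting. The plan is to (i) translate the coincidence of the transfer functions into a coincidence of Herglotz--Nevanlinna $V$-functions, (ii) build the intertwining unitary $U$ on the dense linear manifold of resolvent vectors, and (iii) lift it to the rigged triples and read off the identities in \eqref{167}.

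For step (i), I would apply \eqref{144} to both systems, so that \eqref{166} yields $V_{\Theta_1}(\lambda)=V_{\Theta_2}(\lambda)$ on $\rho(T_1)\cap\rho(T_2)$ with $\IM\lambda\ne 0$. For $e\in E$ and such $\lambda$ I introduce the resolvent vectors
\[
f_i(\lambda,e):=(\bA_i-\lambda I)^{-1}K_ie\in\calH_{+i},\qquad i=1,2,
\]
and use $\IM\bA_i=K_iJK_i^*$ together with the resolvent identity to express the Gram quantity $(f_i(\lambda,e),f_i(\mu,e'))_{\calH_i}$ solely in terms of $W_{\Theta_i}$, $J$, $\lambda$, $\mu$, $e$, $e'$. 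Hypothesis \eqref{166} then forces these Gram matrices to agree for $i=1,2$.

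For step (ii), primeness of $\Theta_i$, combined with \eqref{prime_op} and the observation that the invertible channel operator $K_i$ generates the defect subspaces $\sN_\lambda$ via the resolvent of $\bA_i$, gives density in $\calH_i$ of
\[
\cL_i=\spn\{f_i(\lambda,e)\,:\,e\in E,\ \lambda\in\rho(T_i),\ \IM\lambda\ne 0\}.
\]
The Gram identity of step (i) makes the rule $U_0 f_1(\lambda,e):=f_2(\lambda,e)$, extended by linearity, a well-defined isometry $\cL_1\to\cL_2$; applying the same density argument to $\cL_2$, it closes by continuity to a surjective isometry $U:\calH_1\to\calH_2$.

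For step (iii), by construction $U(\bA_1-\lambda I)^{-1}K_1e=(\bA_2-\lambda I)^{-1}K_2e$ on $E$, and an analogous Gram argument for the adjoint resolvents yields the matching identity using $\bA_i^*$. Together these imply $UA_1=A_2U$, so the graph norms $(\cdot,\cdot)_{+i}=(\cdot,\cdot)_{\calH_i}+(A_i^*\cdot,A_i^*\cdot)$ are intertwined and $U_+:=U|_{\calH_{+1}}$ becomes an isometry of $\calH_{+1}$ onto $\calH_{+2}$. Passing to adjoints in the dual pairing $\calH_{-i}\leftrightarrow\calH_{+i}$ yields the surjective isometry $U_-:\calH_{-1}\to\calH_{-2}$, and the three identities $\bA_2=U_-\bA_1 U_+^{-1}$, $U_-K_1=K_2$, and $UT_1=T_2U$ are then read off by applying $U_-$ and $U$ to $(\bA_1-\lambda I)f_1(\lambda,e)=K_1e$ in the appropriate spaces, together with the compatibility relations \eqref{165}. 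The main obstacle I anticipate is not the algebra but the careful bookkeeping across the rigged triple: verifying that the Hilbert-space unitary $U$ indeed restricts to $U_+$ and extends to $U_-$ as isometries onto the correct spaces is where the bulk of the technical work of \cite{ArTs0} lies.
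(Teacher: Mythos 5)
The paper does not prove this theorem: it is quoted verbatim from \cite{ArTs0}, with the footnote deferring all the technical work on $U_+$ and $U_-$ to that reference. Your outline is the classical Brodski\u{\i}--Liv\v{s}ic uniqueness argument (Gram identities for resolvent vectors expressed through $W_\Theta$, density from primeness, closure to a unitary, then lifting to the rigged triple), which is exactly the route taken in the cited source, and it is sound as a sketch.
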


\begin{corollary}\label{unique}
Let $\Theta_1$ and $\Theta_2$ be the two prime systems  from the
statement of theorem \ref{12}. Then the mapping $U$ described in the
conclusion of the theorem is unique.
\end{corollary}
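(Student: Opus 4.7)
The strategy is to exploit the intertwining relations \eqref{167}, together with the primeness hypothesis, to pin down $U$ on a dense linear manifold of $\calH_1$, and then conclude by continuity. Suppose $U,U':\calH_1\to\calH_2$ are two isometries satisfying the conclusions of Theorem~\ref{12}. From $U_-K_1=K_2$ and $\bA_2=U_-\bA_1 U_+^{-1}$ one obtains, for every $\lambda\in\rho(T_1)=\rho(T_2)$ and every $e\in E$,
\begin{equation*}
U_+\bigl[(\bA_1-\lambda I)^{-1}K_1e\bigr]=(\bA_2-\lambda I)^{-1}K_2e,
\end{equation*}
and the analogous identity with $U_+'$. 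Hence $U_+$ and $U_+'$ coincide on the linear manifold
\begin{equation*}
\cL:=\spn\bigl\{(\bA_1-\lambda I)^{-1}K_1e:\lambda\in\rho(T_1),\ \lambda\ne\bar\lambda,\ e\in E\bigr\}\subset\calH_{+1}.
\end{equation*}

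The heart of the argument is the assertion that $\cL$ is dense in $\calH_1$. Set $x_\lambda(e):=(\bA_1-\lambda I)^{-1}K_1e$. Using $A_1\subset T_1\cap T_1^*$, the resulting identity $\bA_1|_{D(A_1)}=A_1=\bA_1^*|_{D(A_1)}$, and $\IM\bA_1=K_1JK_1^*$, I would first check that $K_1^*$ vanishes on $D(A_1)$, and then, via the duality pairing between $\calH_\pm$, deduce $A_1^*x_\lambda(e)=\lambda x_\lambda(e)$; thus $x_\lambda(e)\in\sN_\lambda$ for every non-real $\lambda\in\rho(T_1)$. Since every $(*)$-extension may be included as the main operator of a rigged canonical system with invertible channel operator (see the discussion following \eqref{142}), a dimension count matching $\dim E$ against the finite defect index of $A_1$ yields $(\bA_1-\lambda I)^{-1}K_1E=\sN_\lambda$. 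The primeness of $\Theta_1$, expressed through \eqref{prime_op}, then guarantees that the closed linear span of the $\sN_\lambda$ over $\lambda\in\rho(T_1)\setminus\dR$ equals $\calH_1$, whence $\cL$ is dense in $\calH_1$.

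Since $U$ and $U'$ are bounded (indeed isometric) from $\calH_1$ to $\calH_2$ and agree on the dense subspace $\cL\subset\calH_1$, they coincide on all of $\calH_1$, so $U=U'$; the associated $U_\pm$ are then uniquely determined via the identifications spelled out in the footnote to Theorem~\ref{12}. The principal technical obstacle is the equality $(\bA_1-\lambda I)^{-1}K_1E=\sN_\lambda$: the inclusion $\subseteq$ reduces to the short adjoint computation sketched above, but the reverse inclusion either leans on invertibility of $K_1$ (which can always be arranged) or else on a finer argument based on the explicit $(*)$-extension formulas \eqref{111}--\eqref{112}.
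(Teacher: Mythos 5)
Your proof is correct and follows essentially the same route as the paper's: both reduce uniqueness to the fact that, by primeness, the defect subspaces $\sN_\lambda$ span $\calH_1$, and that the intertwining relations force the two candidate maps to agree on each $\sN_\lambda$. The only difference is that the paper works with $\RE\bA_1$ and quotes the known identity $(\RE\bA-\lambda I)^{-1}KE=\sN_\lambda$ (its \eqref{spin}), which spares the technical work you flag around $(\bA_1-\lambda I)^{-1}K_1E=\sN_\lambda$ and the injectivity of $K_1$.
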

\begin{proof}
First let us make an observation that if $\Theta= \begin{pmatrix}
\bA&K&J\cr \calH_{+} \subset \calH \subset \calH_{-}& &E\cr
\end{pmatrix} $
is a prime rigged canonical system such that $U_-\bA=\bA U_+$ and
$U_-K=K$, where $U$ is an isometry mapping described in theorem
\ref{12}, then $U=I$. Indeed, it is well known \cite{TSh1} that
\begin{equation}\label{spin}
    (\RE\bA-\lambda I)^{-1}KE=\sN_\lambda.
\end{equation}
We have
$$
\begin{aligned}
U(\RE\bA-\lambda I)^{-1}K&e=U_+(\RE\bA-\lambda
I)^{-1}Ke=(\RE\bA-\lambda I)^{-1}U_-Ke\\
&=(\RE\bA-\lambda I)^{-1}Ke,\qquad \forall e\in
E,\;\lambda\ne\bar\lambda.
\end{aligned}
$$
Combining the above equation with \eqref{prime_op} and \eqref{spin}
we obtain $U=I$.

Now let $\Theta_1$ and $\Theta_2$ be the two prime systems  from the
statement of theorem \ref{12}. Suppose there are two isometric
mappings $U_1$ and $U_2$ guaranteed by theorem \ref{12}. Then the
relations
$$
\bA_2=U_{-,1}\bA_1U_{+,1}^{-1},\quad U_{-,1}K_1=K_2,\quad
\bA_2=U_{-,2}\bA_1U_{+,2}^{-1},\quad U_{-,2}K_1=K_2,
$$
lead to
$$
\bA_1 U_{+,1}^{-1}U_{+,2}=U_{-,1}^{-1}U_{-,2}\bA_1,\quad
U_{-,1}^{-1}U_{-,2}K=K.
$$
Since $\Theta_1$ is prime then $U^{-1}_1U_2=I$ and hence $U_1=U_2$.
This proves the uniqueness of $U$.
\end{proof}

Now we shall construct a rigged canonical system based on a
non-self-adjoint Schr\"odinger operator. One can easily check that
the ($*$)-extension
\[
\bA y=-y^{\prime\prime}+q(x)y-\frac{1}{\mu-h}\,[y^{\prime}(a)
-hy(a)]\,[\mu \delta (x-a)+\delta^{\prime}(x-a)],\;\;\IM h>0
\]
of the non-self-adjoint Schr\"odinger operator $T_h$ of the form
\eqref{131} satisfies the condition
\begin{equation}
\label{145} \IM\bA=\frac{\bA - \bA^*}{2i}=(.,g)g,
\end{equation}
where
\begin{equation}
\label{146} g=\frac{(\IM h)^{\frac{1}{2}}}{|\mu - h|}\,
[\mu\delta(x-a)+\delta^{\prime}(x-a)]
\end{equation}
and $\delta(x-a), \delta^{\prime}(x)$ are the delta-function and its
derivative at the point a. Moreover,
\begin{equation}
\label{147} (y,g)=\frac{(\IM h)^{\frac{1}{2}}}{|\mu - h|}\ [\mu y(a)
-y^{\prime}(a)],
\end{equation}
where
$$y\in \calH_+, g\in \calH_-, \calH_+ \subset L_2(a,+\infty) \subset \calH_-$$ and
the triplet of Hilbert spaces is as discussed in theorem \ref{10}.
Let $E=\dC$, $K{c}=cg \;(c\in \dC)$. It is clear that
\begin{equation}
\label{148} K^* y=(y,g),\quad  y\in \calH_+
\end{equation}
and $\IM\bA=KK^*.$ Therefore, the array
\begin{equation}
\label{149} \Theta= \begin{pmatrix} \bA&K&1\cr \calH_+ \subset
L_2[a,+\infty) \subset \calH_-& &\dC\cr \end{pmatrix}
\end{equation}
is a rigged canonical system  with the main operator $\bA$ of the
form \eqref{137}, the direction operator $J=1$ and the channel
operator $K$ of the form \eqref{148}. Our next logical step is
finding the transfer function of \eqref{149}. It was shown in
\cite{ArTs0} that
\begin{equation}
\label{150} W_\Theta(\lambda)= \frac{\mu -h}{\mu - \overline h}\,\,
\frac{m_\infty(\lambda)+ \overline h}{m_\infty(\lambda)+h},
\end{equation}
and
\begin{equation}
\label{1501}
V_{\Theta}(\lambda)=\frac{\left(m_\infty(\lambda)+\mu\right)\IM h}
{\left(\mu-\RE h\right)m_\infty(\lambda)+\mu\RE h-|h|^2}.
\end{equation}


\section{Realization of Stieltjes functions}\label{s-4}

Let $E$ be a finite-dimensional Hilbert space. The scalar versions
of the following definition can be found in \cite{KK74}.

\begin{definition}\label{d8-4-1}
We will call an operator-valued Herglotz-Nevanlinna function
$V(z)\in [E,E]$ by a \textit{Stieltjes function}  if $V(z)$  admits
the following integral representation
\begin{equation}\label{e8-94}
V(z) =\gamma+\int\limits_0^\infty\frac {dG(t)}{t-z},
\end{equation}
where $\gamma\ge0$ and $G(t)$ is a non-decreasing on $[0,+\infty)$
operator-valued function such that
$$\int\limits^\infty_0\frac{(dG(t)e,e)_E}{1+t}<\infty,\quad \forall e\in E.$$
\end{definition}
Alternatively (see \cite{KK74}) an operator-valued function $V(z)$
is Stieltjes if it is holomorphic in $\Ext[0,+\infty)$ and
\begin{equation}\label{e4-0}
\frac{\IM[zV(z)]}{\IM z}\ge0.
\end{equation}


The theorem \ref{t8-20} below was stated in \cite{DTs}, \cite{DoTs}
and we present its proof for the  convenience of a reader.
\begin{theorem}\label{t8-20}
Let $\Theta$ be a prime  system of the form \eqref{e6-3-2}. Then an
operator-valued function $V_\Theta(z)$ defined by \eqref{143},
\eqref{144} is a Stieltjes function if and only if the main operator
$\mathbb A$ of the system $\Theta$ is accretive.
\end{theorem}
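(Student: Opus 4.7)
The plan is to characterize the Stieltjes property of $V_\Theta$ via the equivalent condition \eqref{e4-0}, translate it into nonnegativity of the self-adjoint quasi-kernel $\widehat A$ of $\bA_R=\RE\bA$, and then use the primeness of $\Theta$ to identify this with accretivity of $\bA$. By \eqref{143}, $V_\Theta(z)=K^*(\widehat A-zI)^{-1}K$.

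The key computation to perform is the following. Applying $z(\widehat A-zI)^{-1}=\widehat A(\widehat A-zI)^{-1}-I$ together with $V_\Theta(\bar z)=V_\Theta(z)^*$ and the resolvent identity yields
\begin{equation*}
\IM[zV_\Theta(z)]=\IM z\cdot K^*(\widehat A-\bar z I)^{-1}\widehat A(\widehat A-zI)^{-1}K.
\end{equation*}
Setting $\xi_z:=(\widehat A-zI)^{-1}Ke$ for $e\in E$ and using the self-adjointness of $\widehat A$, this rewrites as
\begin{equation*}
\frac{(\IM[zV_\Theta(z)]e,e)_E}{\IM z}=(\widehat A\,\xi_z,\xi_z)_\calH.
\end{equation*}

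For the ``if'' direction, if $\bA$ is accretive then $\widehat A$ is nonnegative self-adjoint; the right side of the identity above is then $\ge 0$, and $\sigma(\widehat A)\subset[0,\infty)$ makes $V_\Theta$ holomorphic on $\dC\setminus[0,\infty)$. By \eqref{e4-0} this is exactly the Stieltjes property. For the ``only if'' direction, assume $V_\Theta$ is Stieltjes and introduce the scalar spectral measure $\mu_e(B):=\|E^{\widehat A}_B Ke\|_\calH^2$. The spectral theorem for $\widehat A$ converts \eqref{e4-0} into
\begin{equation*}
\int_\dR\frac{t\,d\mu_e(t)}{|t-z|^2}\ge 0,\quad z\in\dC\setminus\dR,\ e\in E.
\end{equation*}
Taking $z=x+iy$ with fixed $x<0$ and $y\downarrow 0$, and exploiting the approximate-identity behaviour of $(y/\pi)/[(t-x)^2+y^2]$ at $t=x$, rules out any mass of $\mu_e$ on $(-\infty,0)$. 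Hence $Ke\in\ran E^{\widehat A}_{[0,\infty)}$ for every $e\in E$. Because this subspace reduces $\widehat A$, it also contains each resolvent image $(\widehat A-\lambda)^{-1}Ke$, which by \eqref{spin} spans $\sN_\lambda$. The primeness condition \eqref{prime_op} then forces $\ran E^{\widehat A}_{[0,\infty)}=\calH$, i.e.\ $\widehat A\ge 0$, which is equivalent to $\bA$ being accretive.

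The delicate point will be ruling out negative mass of $\mu_e$ on $(-\infty,0)$ from the integral inequality: the absolutely continuous, atomic, and singular continuous parts must each be excluded, essentially by testing with an approximate $\delta$-function at a negative base point. Once this is done, the primeness hypothesis \eqref{prime_op} cleanly upgrades the per-vector statement $Ke\in\ran E^{\widehat A}_{[0,\infty)}$ to the global nonnegativity of $\widehat A$, from which the accretivity of $\bA$ follows.
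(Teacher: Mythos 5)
Your argument hinges on two identifications that do not hold in this setting: reading \eqref{143} as $V_\Theta(z)=K^*(\widehat A-zI)^{-1}K$ with $\widehat A$ the self-adjoint quasi-kernel acting \emph{in} $\calH$, and the claimed equivalence ``$\widehat A\ge 0$ iff $\bA$ is accretive.'' The second is the fatal one. In \eqref{143} the resolvent is that of the bi-extension $\RE\bA\in[\calH_+,\calH_-]$, and $K$ maps $E$ into $\calH_-$, not into $\calH$; hence $Ke$ is not a vector of $\calH$, the spectral measure $\mu_e(B)=\|E^{\widehat A}_BKe\|^2$ is not defined, and $\xi_z=(\RE\bA-zI)^{-1}Ke$ lies in $\calH_+$ but not in $\cD(\widehat A)$, since $\RE\bA\,\xi_z=Ke+z\xi_z\notin\calH$. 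The correct identity is $(\IM[zV_\Theta(z)]e,e)_E/\IM z=(\RE\bA\,\xi_z,\xi_z)$, with the $\calH_-$--$\calH_+$ pairing and the full bi-extension, not the quasi-kernel; with that replacement your ``if'' direction survives. Accretivity of $\bA$, however, means $(\RE\bA\,x,x)\ge0$ for \emph{all} $x\in\calH_+$, which is strictly stronger than nonnegativity of the quasi-kernel, so your ``only if'' direction proves too little even if every measure-theoretic step were completed. The paper itself supplies counterexamples to your equivalence: for $V(z)=\gamma+\int_0^\infty d\sigma(t)/(t-z)$ as in \eqref{e8-195}, the prime realizing system of Theorem \ref{t8-33} always has the positive quasi-kernel $\ti B_\theta$ of \eqref{e8-193}, yet for $\gamma<0$ the function is not Stieltjes and $\bA$ is not accretive (cf.\ the discussion following \eqref{e8-250}, where $\bA$ is accretive precisely when $\gamma\ge0$). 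Put differently, $V_\Theta(-\infty)=\gamma$ can be nonzero only because $Ke\in\calH_-\setminus\calH$; if $V_\Theta$ really were the resolvent form of a self-adjoint operator in $\calH$, one would have $\gamma=0$, and all the information distinguishing accretive from non-accretive $\bA$ is carried exactly by the part of $\RE\bA$ that your reduction to $\widehat A$ discards.

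The paper's proof works directly with the bi-extension: it computes that the Gram matrix $\bigl((\RE\bA\,x_k,x_l)\bigr)$ on the vectors $x_k=(\RE\bA-z_kI)^{-1}Kh_k$ equals the Stieltjes--Pick kernel $\left(\frac{z_kV_\Theta(z_k)-\bar z_lV_\Theta(\bar z_l)}{z_k-\bar z_l}h_k,h_l\right)_E$, so positivity of that kernel is equivalent to $(\RE\bA\,x,x)\ge0$ for $x$ in the span of the defect subspaces $\sN_z$ (which by \eqref{spin} are exactly the ranges $(\RE\bA-zI)^{-1}KE$); primeness \eqref{prime_op} then upgrades this to all of $\calH_+$, which is accretivity of $\bA$. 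Note that primeness enters there, to pass from the defect subspaces to the whole space, not, as in your write-up, to spread a spectral-subspace condition for $\widehat A$.
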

\begin{proof}
Let us assume first that $\mathbb A$ is an accretive operator, i.e.
$(\RE\bA x,x)\ge 0$, for all $x\in\calH_+$. Let $\{z_k\}$
($k=1,...,n$) be a sequence of non-real  complex numbers and $h_k$
be a sequence of vectors in $E$. Let us denote
\begin{equation}\label{e4-2'}
Kh_k=\delta_k,\quad x_k=(\RE\bA-z_kI)^{-1}\delta_k,\quad
x=\sum_{k=1}^n x_k.
\end{equation}
Since $(\RE\bA x,x)\ge0$, we have
\begin{equation}\label{e4-3'}
\sum_{k,l=1}^n (\RE\bA x_k,x_l)\ge0.
\end{equation}
By formal calculations one can have
$$
\RE\bA x_k=\delta_k +z_k(\RE\bA-z_k I)^{-1}\delta_k,
$$
and
$$
\begin{aligned}
\sum_{k,l=1}^n (\RE\bA x_k,x_l)&=\sum_{k,l=1}^n\big[(\delta_k,
(\RE\bA-z_l I)^{-1}\delta_l)\\
&+(z_k(\RE\bA-z_k I)^{-1}\delta_k,(\RE\bA-z_k I)^{-1}\delta_l)\big].
\end{aligned}
$$
Using obvious equalities
$$\big((\RE\bA-z_k I)^{-1}Kh_k,Kh_l\big)=\big(V_\theta(z_k)h_k,h_l\big)_E,$$
and
$$\big((\RE\bA-\bar z_l I)^{-1}(\RE\bA-z_k I)^{-1}Kh_k,Kh_l\big)=
\left(\frac{V_\theta(z_k)-V_\theta(\bar z_l)}{z_k-\bar
z_l}h_k,h_l\right)_E,
$$
we obtain
\begin{equation}\label{e4-4'}
\sum_{k,l=1}^n (\RE\bA
x_k,x_l)=\sum_{k,l=1}^n\left(\frac{z_kV_\theta(z_k)-\bar
z_lV_\theta(\bar z_l)}{z_k-\bar z_l}h_k,h_l\right)_E\ge0.
\end{equation}
The choice of $z_k$ was arbitrary, which means that $V_\Theta(z)$ is
a Stieltjes function (see \cite{AlTs1}).

Now we prove necessity. Since $\Theta$ is a prime system then $A$ is
a prime symmetric operator. Then the equivalence of (\ref{e4-4'})
and (\ref{e4-3'}) implies that $(\RE\bA x,x)\ge0$ for any $x$ from
c.l.s.$\left\{\mathfrak N_z\right\}$, $z\ne\bar z$. As we have
already mentioned above, a symmetric operator $A$ with the equal
deficiency indices is prime if and only if for all
$\lambda\ne\bar\lambda$
$$
\text{\rm c.l.s.}\left\{\mathfrak N_\lambda\right\}=\calH.
$$
Therefore we can conclude that $(\RE\bA x,x)\ge0$ for any
$x\in\calH_+$ and hence $\bA$ is an accretive operator.

\end{proof}

A system $\Theta $ of the form \eqref{e6-3-2} is called an
\textit{accretive system} if its main operator $\bA$ is accretive.

Now we define a certain class $S_0(R)$ of realizable Stieltjes
functions. At this point we need to note that since Stieltjes
functions form a subset of Herglotz-Nevanlinna functions then we can
utilize the conditions \eqref{cond0} to form a
 \textit{class $S(R)$} of
all \textit{realizable Stieltjes functions} (see also \cite{DoTs}).
Clearly, $S(R)$ is a subclass of $N(R)$ of all realizable
Herglotz-Nevanlinna functions described in details in \cite{BT3} and
\cite{BT4}.  To see the specifications of the class $S(R)$ we recall
that aside of integral representation \eqref{e8-94}, any Stieltjes
function admits a representation \eqref{nev0}. Applying condition
\eqref{cond0} we obtain
\begin{equation}\label{e8-100}
Q=\frac{1}{2}\left[V_\theta(-i)+V^\ast_\theta(-i)\right]=
\gamma+\int_{0}^{+\infty }\frac {t} {1+t^2}dG(t).
\end{equation}
Combining the second part of condition \eqref{cond0} and
(\ref{e8-100}) we conclude that
\begin{equation}\label{gamma0}
\gamma e=0,
\end{equation}
for all $e\in E$ such that
\begin{equation}\label{e8-99}
 \int_0^{\infty } (dG(t)e,e)_E < \infty.
\end{equation}
holds.  Consequently, \eqref{gamma0}-\eqref{e8-99} is precisely the
condition for $V(z)\in S(R)$.

 We are going to focus though on the subclass $S_0(R)$ of
$S(R)$ whose definition is the following.

\begin{definition}\label{d8-2} An operator-valued Stieltjes function $V(z)\in [E,E]$
is said to be a member of the \textbf{class
$S_0(R)$}\index{Class!$S_0(R)$} if in the representation
\eqref{e8-94} we have
\begin{equation}\label{e8-101}
 \int_0^{\infty } (dG(t)e,e)_E = \infty.
\end{equation}
for all non-zero $e\in E$.
\end{definition}
We note that a function $V(z)$ can belong to class $S_0(R)$ and have
an arbitrary constant $\gamma\ge0$ in the representation
\eqref{e8-94}.

 The following statement \cite{DoTs} is the direct realization theorem for the functions of the class $S_0(R)$.

\begin{theorem}\label{t8-17} Let $ \Theta $ be an accretive system
of the form \eqref{e6-3-2}. Then the operator-function $V_\Theta
(z)$ of the form \eqref{143}, \eqref{144} belongs to the class
$S_0(R)$.
\end{theorem}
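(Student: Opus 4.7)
The plan is to verify the three requirements for $V_\Theta \in S_0(R)$ in order: (i) that $V_\Theta$ is operator-valued Stieltjes in the sense of \eqref{e4-0}, (ii) that the realizability conditions \eqref{gamma0}-\eqref{e8-99} defining $S(R)$ are met, and (iii) the divergence condition \eqref{e8-101}. For (i), I would simply re-run the forward (sufficiency) direction of the proof of Theorem \ref{t8-20}: accretivity of $\bA$ gives $(\RE\bA x,x)\ge 0$ on $\calH_+$, and the choice $x=\sum_{k=1}^n(\RE\bA-z_k I)^{-1}Kh_k$ for arbitrary $\{z_k\}\subset\cmr$ and $\{h_k\}\subset E$ yields, exactly as in \eqref{e4-4'}, the Stieltjes positivity
\[
\sum_{k,l=1}^n\left(\frac{z_kV_\Theta(z_k)-\bar z_l V_\Theta(\bar z_l)}{z_k-\bar z_l}h_k,h_l\right)_E\ge 0.
\]
Primeness was invoked only in the converse direction of Theorem \ref{t8-20}, so this argument is valid here and produces a representation of the form \eqref{e8-94}.

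For (ii), I would use that any $V_\Theta$ arising from a rigged canonical system is a realizable Herglotz-Nevanlinna function, so in its representation \eqref{nev0} we have $L=0$ and the integral identity for $Q$ in \eqref{cond0}. Matching \eqref{nev0} against the Stieltjes form \eqref{e8-94} through \eqref{e8-100} translates \eqref{cond0} into the defining condition $\gamma e = 0$ whenever \eqref{e8-99} holds, i.e.\ into \eqref{gamma0}-\eqref{e8-99}.

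The main obstacle is step (iii). The approach is to identify $(dG(t)e,e)_E$ with the spectral measure $d(\widehat E(t)Ke,Ke)$ of the quasi-kernel $\widehat A$ of $\bA_R$, via the Riesz-Berezanskii identification, so that $\int_0^\infty d(G(t)e,e)_E=\|Ke\|_\calH^2$ (accretivity of $\bA$ guarantees $\widehat A\ge 0$, placing the spectrum in $[0,\infty)$ and accounting for the full norm). Using the description \eqref{112}-\eqref{113} of $(*)$-extensions, the range of $K$ lies inside $R(\Phi-I)\sN_i\subset \calH_-\setminus\calH$; consequently $Ke\notin\calH$ for any non-zero $e\in E$, so $\|Ke\|_\calH=+\infty$ and \eqref{e8-101} follows. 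The delicate part is in rigorously justifying the spectral identification between $G$ and $\widehat E$ in the rigged setting and ruling out the degenerate case $Ke\in\calH$ for some non-zero $e$, both of which rely on $\bA$ being a proper $(*)$-extension and not merely $A^*$.
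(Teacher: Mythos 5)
Your steps (i) and (ii) track the paper's proof: the paper likewise disposes of the Stieltjes property by simply invoking Theorem \ref{t8-20} (and you are right that only the sufficiency direction, which does not use primeness, is needed there). Step (iii) is where you take a genuinely different route. The paper's argument is a two-line citation: by \cite{BT3}, \cite{BT4} the exceptional set $E_\infty=\left\{ e\in E:\int_0^{\infty}(dG(t)e,e)_E<\infty\right\}$ equals $K^{-1}\sL$ with $\sL=\calH\ominus\overline{\cD(A)}$, and since the paper restricts throughout to densely defined $A$, one gets $\sL=\{0\}$, hence $E_\infty=\{0\}$, which is \eqref{e8-101}. You instead re-derive this from first principles: you identify $(dG(t)e,e)_E$ with the spectral measure of the quasi-kernel of $\bA_R$ evaluated at $Ke$, so that finiteness of the integral forces $Ke\in\calH$, and you then exclude this because $\ran K\subset R(\Phi-I)\sN_i$ meets $\calH$ only at $0$ --- which is correct and follows from the von Neumann decomposition $\cD(A^*)=\cD(A)\dotplus\sN_i\dotplus\sN_{-i}$, since $Ru\in\calH$ with $u=(\Phi-I)x$ would force $x+\Phi x\in\cD(A)$ and hence $x=0$. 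Your route is more self-contained and also exposes a clean intermediate fact (if $Ke\in\calH$ then automatically $Ke\perp\cD(A)$, because $\IM\bA=KJK^*$ vanishes on $\cD(A)$ and $K$ is invertible), which reconciles your criterion $K^{-1}\calH$ with the paper's $K^{-1}\sL$.

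One caveat: the ``delicate part'' you flag at the end --- the rigorous identification of $G$ with the spectral measure of the quasi-kernel in the rigged setting --- is precisely the content of the lemma the paper cites from \cite{BT3}, \cite{BT4}. As written, your argument assumes this identification rather than proving it, so to make the proof complete you must either carry out that computation (essentially reproving the cited result $E_\infty=K^{-1}\sL$) or simply cite it, as the paper does. With that reference supplied, your proof closes correctly.
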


\begin{proof}
To see that $V_\Theta(z)$ is a Stieltjes operator-function we merely
apply theorem \ref{t8-20} to system $\Theta$.

Now we will show that $V_\Theta(z)$ belongs to $S_0(R)$. It was
shown in \cite{BT3} and \cite{BT4} that $E_\infty=K^{-1}\mathfrak
L$, where $\mathfrak L=\calH \ominus \overline {\cD(A)}$ and
$$E_{\infty }=\left\{ e\in E:  \int_0^{+\infty }\left(
dG(t)e,e\right) _E< \infty \right\}.$$ But $\overline{\cD(A)}=\calH$
and consequently $\sL=\{0\}$. Next, $E_\infty=\{0\}$,
$$\int_{0}^{\infty } (dG(t)e,e)_E = \infty,$$
for all non-zero $e\in E$,  and therefore $V_\theta(z)\in S_0(R)$.

\end{proof}

The inverse realization theorem  can be stated and proved (see
\cite{DoTs}) for the classes $S_0(R)$ as follows.

\begin{theorem}\label{t8-18} Let a operator-valued function $V(z)$
belong to the class $S_0(R)$.  Then $V(z)$ admits a realization by
an accretive prime system $\Theta$ of the form \eqref{e6-3-2} with
$\cD(T)\ne\cD(T^\ast)$ and $J=I$.
\end{theorem}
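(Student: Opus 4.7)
The plan is to derive the realization from the general Herglotz-Nevanlinna realization theorem of \cite{BT3}, \cite{BT4} and then upgrade the output using Theorem \ref{t8-20}.

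First I would verify that $S_0(R)\subset N(R)$. Writing the Stieltjes integral representation \eqref{e8-94} in the Herglotz-Nevanlinna form \eqref{nev0}, one finds $L=0$ (since a Stieltjes function is analytic on $(-\infty,0)$, the ratio $V(z)/z$ tends to $0$ as $z\to-\infty$, which forces the linear coefficient to vanish) together with $Q=\gamma+\int_0^\infty\frac{t}{1+t^2}\,dG(t)$, exactly as in \eqref{e8-100}. The realizability condition \eqref{cond0} then amounts to a single implication whose hypothesis $\int_0^\infty(dG(t)e,e)_E<\infty$ fails for every nonzero $e\in E$ by \eqref{e8-101}; hence the implication holds vacuously and $V(z)\in N(R)$. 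The direct realization theorem for $N(R)$ from \cite{BT3}, \cite{BT4} then supplies a rigged canonical system $\Theta$ of the form \eqref{e6-3-2} with direction operator $J=I$ satisfying $V_\Theta=V$.

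Second, I would pass to a prime subsystem. Let $\calH_0=\mathop{c.l.s.}\{\sN_\lambda:\lambda\ne\bar\lambda,\ \lambda\in\rho(T)\}$. By \eqref{spin} this subspace contains $\ran K$ and is invariant under the resolvent of $\RE\bA$, so restricting the rigged triplet and all operators of $\Theta$ to $\calH_0$ yields a new system $\Theta'$ with the same transfer function and $V_{\Theta'}=V$. The orthogonal complement of $\calH_0$ reduces $A$ to a self-adjoint operator, so by the characterization \eqref{prime_op} the symmetric operator $A'=A\uphar\calH_0$ of $\Theta'$ is prime. Since $V(z)$ is a Stieltjes function, Theorem \ref{t8-20} applied to the prime system $\Theta'$ yields that its main operator is accretive.

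Third, I would establish $\cD(T)\ne\cD(T^*)$. If on the contrary $T=T^*$, then $T$ is a self-adjoint extension of $A$ and the real part of $\bA$ coincides with $T$ on the relevant domain, so \eqref{143} would reduce $V(z)$ to $K^*(T-zI)^{-1}K$, a Herglotz-Nevanlinna function whose associated operator-valued spectral measure on $\dR$ has finite total mass $K^*K\in[E,E]$. This would give $\int_0^\infty(dG(t)e,e)_E\le(K^*Ke,e)_E<\infty$ for every $e\in E$, contradicting \eqref{e8-101}. I expect the most delicate point to be simultaneously preserving accretivity, primeness, the identity $V_{\Theta'}=V$, and the non-self-adjointness $\cD(T)\ne\cD(T^*)$ through the passage to the prime subsystem; once these are handled, the remaining assertions follow by assembling previously stated results.
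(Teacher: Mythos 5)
Your first step is essentially the paper's: you verify that \eqref{e8-101} makes the second condition in \eqref{cond0} vacuous, so $V\in S_0(R)$ is realizable as a Herglotz--Nevanlinna function. But from there you diverge, and the divergence hides the main gap. You take an arbitrary realizing system for the class $N(R)$ and then try to manufacture primeness by cutting down to $\calH_0=\mathop{c.l.s.}\sN_\lambda$. In the rigged-space setting this reduction is not a routine step, and you assert rather than prove it: you must restrict the whole triplet $\calH_+\subset\calH\subset\calH_-$, check that the restricted symmetric operator is closed and \emph{densely defined} in $\calH_0$ (a standing hypothesis of the paper's framework), that the restricted $\bA$ is again a $(*)$-extension with $\IM\bA'=K'K'^*$, and that $V_{\Theta'}=V_\Theta$; none of this is supplied. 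You also never use the real force of condition \eqref{e8-101}: via the identity $E_\infty=K^{-1}\sL$ with $\sL=\calH\ominus\overline{\cD(A)}$, it shows $V\in N_0(R)$, so the \emph{model} system of \cite{BT4} already comes with a densely defined, prime symmetric operator $A$ --- no reduction is needed. (Your argument for $\cD(T)\ne\cD(T^*)$ has the right idea but the expression $K^*K$ is type-incorrect here, since $K$ maps into $\calH_-$ while $K^*$ is defined on $\calH_+$; the clean route is again $E_\infty=K^{-1}\sL$.)

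The second, more substantive omission is that you never prove the statement the paper actually has to work for: that the realizing \emph{system} is prime, i.e.\ $\mathop{c.l.s.}_{\lambda\ne\bar\lambda,\,\lambda\in\rho(T)}\sN_\lambda=\calH$, where the closed linear span is taken only over $\lambda\in\rho(T)$. Primeness of the symmetric operator $A$ (the span over \emph{all} non-real $\lambda$) does not by itself give this, because a priori $\rho(T)$ might miss one half-plane. The paper closes this gap by first applying Theorem \ref{t8-20} (whose necessity direction needs only that $A$ is prime) to get accretivity of $\bA$, deducing from accretivity that $T$ has regular points in both half-planes, and then using the analytic continuation of $\lambda\mapsto(f,U_{\lambda_0\lambda}g)$ with $U_{\lambda_0\lambda}=(\ti A-\lambda_0I)(\ti A-\lambda I)^{-1}$ to upgrade operator-primeness to system-primeness. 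In your version this issue is buried inside the unproved reduction (your $\calH_0$ is defined using only $\lambda\in\rho(T)$, so the same question of whether that span is large enough reappears there). So the architecture you propose could in principle be made to work, but as written the central step is missing, and the argument the paper actually needs --- accretivity first, then system-primeness via regular points in both half-planes --- does not appear.
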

\begin{proof} We have already noted that the class of Stieltjes function lies inside
the wider class of all Herglotz-Nevanlinna functions. Thus all we
actually have to show is  that $S_0(R)\subset N_0(R)$, where
$N_0(R)$ is subclass of realizable Herglotz-Nevanlinna functions
described in \cite{BT4}, and that the realizing system constructed
in \cite{BT4} appears to be an accretive system. The former is
rather obvious and follows directly from the definition of the class
$S_0(R)$. To see that the realizing system is accretive we need to
apply theorem \ref{t8-20} to $V_\theta(z)=V(z)$, where $V_\Theta(z)$
is related to the model system $\Theta$ that was constructed in
\cite{BT4}. As it was also shown in \cite{BT3} and \cite{BT4}, the
symmetric operator $A$ of the model system $\Theta$ is prime and
hence \eqref{prime_op} takes place. We are going to show that in
this case   the system $\Theta$ is also prime, i.e.,
\begin{equation}\label{prime_system}
\mathop{c.l.s.}\limits_{\lambda\ne\bar\lambda,\,\lambda\in\rho(T)}\sN_\lambda=\calH.
\end{equation}
Consider the operator $U_{\lambda_0\lambda}=(\ti A-\lambda_0I)(\ti
A-\lambda I)^{-1}$, where $\ti A$ is an arbitrary self-adjoint
extension of $A$. By a simple check one confirms  that
$U_{\lambda_0\lambda}\sN_{\lambda_0}=\sN_\lambda$. To prove
\eqref{prime_system} we assume that there is a function $f\in\calH$
such that
$$f\perp\mathop{c.l.s.}\limits_{\lambda\ne\bar\lambda,\,\lambda\in\rho(T)}\sN_\lambda.$$
Then $(f,U_{\lambda_0\lambda}g)=0$ for all $g\in\sN_{\lambda_0}$ and
all $\lambda\in\rho(T)$. Bur accretiveness of the system $\Theta$
implies that there are regular points of $T$ in the upper and lower
half-planes. This leads to a conclusion that the function
$\phi(\lambda)=(f,U_{\lambda_0\lambda}g)\equiv0$ for all
$\lambda\ne\bar\lambda$. Combining this with  \eqref{prime_op} we
conclude that $f=0$ and thus \eqref{prime_system} holds.
\end{proof}


\section{Restoring a non-self-adjoint Schr\"odinger operator $T_h$}\label{s-5}

In this section we are going to use the realization results for
Stieltjes functions developed in section \ref{s-4} to obtain the
solution of inverse spectral problem for  Schr\"odinger operator of
the form \eqref{131} in $L_2[a,+\infty)$ with non-self-adjoint
boundary conditions
\begin{equation}\label{e8-191}
 \left\{ \begin{array}{l}
 T_h y=-y^{\prime\prime}+q(x)y \\
 y^{\prime}(a)=hy(a) \\
 \end{array} \right., \quad \left(q(x)=\overline{q(x)},\,\IM h\ne0\right).
\end{equation}
In particular, we will show that if a non-decreasing function
$\sigma(t)$ is the spectral function of positive self-adjoint
boundary value problem
\begin{equation}\label{e8-192}
 \left\{ \begin{array}{l}
 A_\theta y=-y^{\prime\prime}+q(x)y \\
 y^{\prime}(a)=\theta y(a)
 \end{array} \right.
\end{equation}
and satisfies conditions
\begin{equation}\label{e8-192'}
\int\limits_0^{\infty}d\sigma(t)=\infty,\quad
\int\limits_0^{\infty}\frac{d\sigma(t)}{1+t}<\infty,
\end{equation}
then for every $\gamma\ge0$ a Stieltjes function
$$
V(z)=\gamma+\int\limits_0^{\infty}\frac{d\sigma(t)}{t-z}
$$
can be realized in the unique way as a $V_\Theta(z)$ function of an
accretive rigged canonical system $\Theta$ with some Schr\"odinger
operator $T_h$.

Let $\calH=L_2[a,+\infty)$ and $l(y)=-y^{\prime\prime}+q(x)y$ where
$q$ is a real locally summable function. We consider a symmetric
operator
\begin{equation}\label{B-sym}
 \left\{ \begin{array}{l}
 \ti B y=-y^{\prime\prime}+q(x)y \\
 y^{\prime}(a)=y(a)=0
 \end{array} \right.
\end{equation}
together with its positive self-adjoint extension of the form
\begin{equation}\label{e8-193}
 \left\{ \begin{array}{l}
 \ti B_\theta y=-y^{\prime\prime}+q(x)y \\
 y^{\prime}(a)=\theta y(a)
 \end{array} \right.
\end{equation}
defined in $\calH=L_2[a,+\infty)$. A non-decreasing function
$\sigma(\lambda)$ defined on $[0,+\infty)$ is called the
\textit{distribution function} (see \cite{Na68}) \textit{of an
operator pair} $\ti B_\theta$, $\ti B$, where
 $\ti B_\theta$ of the form \eqref{e8-193} is a self-adjoint extension of symmetric operator $\ti B$ of
 the form \eqref{B-sym}, and if the formulas
\begin{equation}\label{e-f-phi}
\begin{aligned}
\varphi(\lambda)&=Uf(x),\\
f(x)&=U^{-1}\varphi(\lambda),
\end{aligned}
\end{equation}
establish one-to-one isometric correspondence $U$ between
$L_2^\sigma[0,+\infty)$ and $L_2[a,+\infty)$. Moreover, this
correspondence is such that the operator $\ti B_\theta$ is unitarily
equivalent to the operator
\begin{equation}\label{Lambda-sigma}
\Lambda_\sigma \varphi(\lambda)=\lambda\varphi(\lambda),\quad
(\varphi(\lambda)\in L_2^\sigma[0,+\infty))
\end{equation}
in $L_2^\sigma[0,+\infty)$ while symmetric operator $\ti B$ in
\eqref{B-sym} is unitarily equivalent to the symmetric operator
\begin{equation}\label{Lambda-0}
\Lambda_\sigma^0 \varphi(\lambda)=\lambda\varphi(\lambda),\quad
\left(\varphi(\lambda)\in L_2^\sigma[0,+\infty),\;
\int_0^{+\infty}\varphi(\lambda)\,d\sigma(\lambda)=0\right).
\end{equation}

\begin{definition}\label{d8-like}
A scalar Herglotz-Nevanlinna function $V(z)$ is called
\textit{Stieltjes like function}  if it has an integral
representation \eqref{e8-94} with an arbitrary (not necessarily
non-negative) constant $\gamma$.
\end{definition}

We are going to introduce a new class of realizable scalar Stieltjes
like functions whose structure is similar to that of $S_0(R)$ of
section \ref{s-4}.
\begin{definition}\label{d8-6} A  Stieltjes
like function $V(z)$ is said to be a member of the \textit{class
$SL_0(R)$}  if it admits an integral representation
\begin{equation}\label{e8-195}
V(z)=\gamma+\int_0^{\infty } \frac{d\sigma(t)}{t-z},\qquad
\big(\gamma\in(-\infty,+\infty)\big),
\end{equation}
where non-decreasing function $\sigma(t)$ satisfies the following
conditions
\begin{equation}\label{e8-196}
 \int_0^{\infty } d\sigma(t) = \infty,\quad \int_0^{\infty }
 \frac{d\sigma(t)}{1+t}<\infty.
\end{equation}
\end{definition}
Consider the following subclasses of $SL_0(R)$.
\begin{definition}\label{d8-7} A  function $V(z)\in SL_0(R)$
belongs to the \textit{class $SL_0^K(R)$} if
\begin{equation}\label{e8-197}
\int_0^{\infty } \frac{d\sigma(t)}{t}=\infty.
\end{equation}
\end{definition}

\begin{definition}\label{d8-8} A  function $V(z)\in SL_0(R)$
belongs to the \textit{class $SL_{01}^K(R)$} if
\begin{equation}\label{e8-197'}
\int_0^{\infty } \frac{d\sigma(t)}{t}<\infty.
\end{equation}
\end{definition}

The following theorem describes the realization of the class
$SL_0(R)$.
\begin{theorem}\label{t8-33}
Let $V(z)\in SL_0(R)$ and the function $\sigma(t)$ be the
distribution function
 of an operator pair
 $\ti B_\theta$ of the form \eqref{B-sym} and $\ti B$ of the form \eqref{e8-193}. Then there exist unique
Schr\"odinger operator $T_h$ ($\IM h>0$) of the form \eqref{e8-191},
operator $\bA$ given by \eqref{137}, operator $K$ as in \eqref{148},
and the rigged canonical system of the Livsic type
\begin{equation}\label{e8-199}
\Theta= \begin{pmatrix} \bA&K&1\cr \calH_+ \subset L_2[a,+\infty)
\subset \calH_-& &\dC\cr \end{pmatrix},
\end{equation}
of the form \eqref{149} so that $V(z)$ is realized by $\Theta$.
\end{theorem}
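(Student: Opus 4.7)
The plan is to realize $V(z)$ by extracting the Schr\"odinger parameter $h$ and the $(*)$-extension parameter $\mu$ directly from the integral representation \eqref{e8-195}, then build $\Theta$ via Theorem~\ref{10} and establish uniqueness through the prime system machinery. The starting observation is that formula \eqref{1501} exhibits $V_\Theta(z)$ as a M\"obius transformation of the Weyl--Titchmarsh function $m_\infty(z)$ whose coefficients depend on the three real parameters $\mu$, $\RE h$, $\IM h$, so matching $V_\Theta(z) = V(z)$ amounts to determining a unique such transformation.

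For the existence step, I would first use the assumption that $\sigma$ is the distribution function of the pair $(\ti B_\theta, \ti B)$: through the unitary equivalence \eqref{e-f-phi}--\eqref{Lambda-sigma} the Stieltjes transform $\int_0^\infty \frac{d\sigma(t)}{t-z}$ is tied to $m_\infty(z)$ by an explicit $\theta$-dependent M\"obius relation, so the integral representation \eqref{e8-195} of $V(z)$ translates into an expression in $m_\infty(z)$. Equating this with \eqref{1501} yields three real equations in $(\mu, \RE h, \IM h)$, which I would extract by asymptotic comparison: the limit $V(z) \to \gamma$ as $z \to -\infty$ together with the growth of $m_\infty(z)$ fixes $\gamma = \IM h/(\mu - \RE h)$, while the behavior near $z = -0$ (which distinguishes the subclasses $SL_0^K(R)$ and $SL_{01}^K(R)$ of Definitions~\ref{d8-7}--\ref{d8-8}) and a first-order expansion supply the remaining relations. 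Solving produces $h$ with $\IM h > 0$ and $\mu \in \dR$, the sign of $\IM h$ being forced by the Herglotz-Nevanlinna positivity of $V(z)$ together with the mass condition $\int_0^\infty d\sigma(t) = \infty$ in \eqref{e8-196}. With these parameters Theorem~\ref{10} produces the unique $(*)$-extension $\bA$ of \eqref{137}, formulas \eqref{146}--\eqref{148} define $g$ and $K$, and the array \eqref{e8-199} is the desired rigged canonical system, with $V_\Theta(z) = V(z)$ holding by construction via \eqref{1501}.

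For uniqueness, suppose $\Theta_1$ and $\Theta_2$ are two such realizing systems. The symmetric operator \eqref{B-sym} is prime with deficiency indices $(1,1)$, and since $T_h$ has a non-self-adjoint boundary condition $\rho(T)$ contains points in both open half-planes, so each $\Theta_k$ is prime in the sense of \eqref{prime_system}. The identity $V_{\Theta_1} = V_{\Theta_2} = V$ forces $W_{\Theta_1} = W_{\Theta_2}$ through \eqref{e5-62}, so Theorem~\ref{12} and Corollary~\ref{unique} supply a unique isometry $U$ intertwining the two systems. Because both $\Theta_k$ are built on $L_2[a,+\infty)$ around the same differential operation $l(y)$, the intertwining $U$ must preserve that structure, and hence the parameters $h$ and $\mu$ in \eqref{131}, \eqref{137} must coincide; this establishes uniqueness of $T_h$, $\bA$ and $K$. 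The principal obstacle is the explicit M\"obius matching in the existence step: signs and branches must be tracked carefully to land in the upper half-plane $\IM h > 0$ rather than the lower, and the two subclasses $SL_0^K(R)$ and $SL_{01}^K(R)$ will require separate treatment since the boundary behavior of $V$ at $z = -0$ differs between them.
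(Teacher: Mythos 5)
Your strategy for existence --- match $V(z)$ against the explicit formula \eqref{1501} and solve for $(\mu,\RE h,\IM h)$ --- is not the paper's route, and as written it has a genuine gap. The paper does not solve for the parameters at all in the proof of Theorem \ref{t8-33}: it first realizes $V(z)$ by the abstract model system $\Theta_\Lambda$ on $L_2^\sigma[0,+\infty)$ (the $N_0(R)$ realization of \cite{BT3}, which requires checking $1\in\calH_-^\sigma$ and that $\Theta_\Lambda$ is prime), and then transports $\Theta_\Lambda$ to $L_2[a,+\infty)$ by the isometry $U$ of \eqref{e-f-phi}; the transported main operator is a $(*)$-extension of an extension of $\ti B$ whose real part has quasi-kernel $\ti B_\theta$, and Theorem \ref{10} then forces it to be of the form \eqref{137} for \emph{some} $h,\mu$. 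This is what guarantees the identity $V_\Theta(z)=V(z)$ for all $z$. Your argument, by contrast, only matches finitely many asymptotic coefficients (the limit at $z\to-\infty$, behavior at $z\to -0$, a first-order term). Two Herglotz functions that agree to a few orders of asymptotics need not coincide, so even after you produce candidate values of $h$ and $\mu$ you have not shown $V_\Theta\equiv V$. What would close this is the ``explicit $\theta$-dependent M\"obius relation'' between $\int_0^\infty\frac{d\sigma(t)}{t-z}$ and $m_\infty(z)$ that you assert but do not prove; with the paper's abstract definition of the distribution function of the pair $(\ti B,\ti B_\theta)$ via the unitary $U$ in \eqref{e-f-phi}--\eqref{Lambda-sigma}, establishing that relation is essentially the content of the theorem (cf.\ Remark \ref{r5-5}, which the paper can only make \emph{after} the theorem and its uniqueness statement are in hand). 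Note also that \eqref{1501} carries the specific normalization of the channel operator \eqref{148}; matching it to the eigenfunction-expansion normalization of $\sigma$ is exactly the delicate point handled by \eqref{e8-241}--\eqref{e8-242} in the paper's later parameter computations, and it does not come for free from asymptotics.

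Your uniqueness argument is closer to the paper's but still incomplete at one point: from Theorem \ref{12} and Corollary \ref{unique} you get a \emph{unique} intertwining isometry $U$ between two realizing systems, but you then assert that $U$ ``must preserve that structure, and hence the parameters $h$ and $\mu$ must coincide.'' An isometry of $L_2[a,+\infty)$ intertwining two Schr\"odinger-type operators need not be the identity without further argument. The paper instead pins down uniqueness by observing that $\RE\bA$ must have the quasi-kernel $\ti B_\theta$ determined by $\sigma$, that a $(*)$-extension with a prescribed quasi-kernel is unique (\cite{TSh1}), and that Theorem \ref{10} gives a bijection between such extensions and the parameters --- which is the cleaner way to conclude that $h$ and $\mu$ are determined. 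Your instinct to use \eqref{e8-224} and the asymptotic value $V(-\infty)=\gamma$ is exactly what the paper does \emph{after} the theorem, in deriving the restoration formulas for $h$ and $\mu$; it is the right tool for computing the parameters once existence is known, but not a substitute for the existence proof itself.
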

\begin{proof}
Since $\sigma(t)$ is the distribution function of the positive
self-adjoint operator, then (see \cite{Na68}) we can completely
restore the operator $\ti B_\theta$ of the form \eqref{e8-193} as
well as a symmetric operator $\ti B$ of the form \eqref{B-sym}. It
follows from the definition of the distribution function above that
there is operator $U$ defined in \eqref{e-f-phi}  establishing
one-to-one isometric correspondence between $L_2^\sigma[0,+\infty)$
and $L_2[a,+\infty)$ while providing for the unitary equivalence
between the operator $\ti B_\theta$ and operator of multiplication
by independent variable $\Lambda_\sigma$ of the form
\eqref{Lambda-sigma}. Taking this into account, we realize (see
\cite{BT3}) a Herglotz-Nevanlinna function $V(z)$ with a rigged
canonical system
$$\Theta_\Lambda=\begin{pmatrix} \mathbf{\Lambda}&K^\sigma&1\cr \calH_+^\sigma \subset L_2^\sigma[0,+\infty)\subset
\calH_-^\sigma& &\dC\cr \end{pmatrix}.
$$
Following the steps for construction of the model system described
in \cite{BT3}, we note that
$$
\mathbf{\Lambda}=\RE\mathbf{\Lambda}+iK^\sigma (K^{\sigma })^*
$$
is a correct ($*$)-extension of an operator $T^\sigma$ such that
$\mathbf{\Lambda}\supset T^\sigma\supset\Lambda_\sigma^0$ where
$\Lambda_\sigma^0$ is defined in \eqref{Lambda-0}. The real part
$\RE\mathbf{\Lambda}$ is a self-adjoint bi-extension of
$\Lambda_\sigma^0$ that has a quasi-kernel $\Lambda_\sigma$ of the
form \eqref{Lambda-sigma}. The operator $K^\sigma$ in the above
system is defined by
$$
K^\sigma c=c\cdot 1,\quad c\in\dC,\;1\in\calH_-^\sigma.
$$
Here we need to clarify why number $1$ belongs to $\calH_-^\sigma$.
To confirm this we need to show that $(x,1)$ defines a continuous
linear functional for every $x\in\calH_+^\sigma$. It was shown in
\cite{BT3}, \cite{BT4} that
$$
\calH_+^\sigma=\cD(\Lambda_\sigma^0)\dotplus\left\{
\frac{c_1}{1+t^2}\right\}\dotplus\left\{
\frac{c_2t}{1+t^2}\right\},\quad c_1,c_2\in\dC.
$$
Consequently, every vector $x\in\calH_+^\sigma$ has three components
$x=x_1+x_2+x_3$ according to the decomposition above. Obviously,
$(x_1,1)$ and $(x_2,1)$ yield convergent integrals while $(x_3,1)$
boils down to
$$
\int_0^{\infty }\frac{t}{1+t^2}\, d\sigma(t).
$$
To see the convergence of the above integral we notice that
$$
\frac{t}{1+t^2}=\frac{t-1}{(1+t^2)(t+1)}+\frac{1}{1+t}\le\frac{1}{1+t^2}+\frac{1}{1+t}.
$$
The integrals taken of the last two expressions on the right side
converge due to \eqref{int0} and \eqref{e8-196}, and hence so does
the integral of the left side. Thus, $(x,1)$ defines a continuous
linear functional for every $x\in\calH_+^\sigma$ and
$1\in\calH_-^\sigma$.

 The
state space of the system $\Theta_\Lambda$ is $\calH_+^\sigma
\subset L_2^\sigma[0,+\infty)\subset \calH_-^\sigma$, where
$\calH_+^\sigma=\cD\big((\Lambda_\sigma^0)^*\big)$. By the
realization theorem \cite{BT3} we have that
$V(z)=V_{\Theta_\Lambda}(z)$.

We can also show that the system $\Theta_\Lambda$ is a prime system.
In order to do so we need to show that
\begin{equation}\label{prime_Lambda}
\mathop{c.l.s.}\limits_{\lambda\ne\bar\lambda,\,\lambda\in\rho(T^\sigma)}\sN_\lambda=L_2^\sigma[0,+\infty),
\end{equation}
where $\sN_\lambda$ are defect subspaces of the symmetric operator
$\Lambda_\sigma^0$. It is known (see \cite{BT3}) that
$\Lambda_\sigma^0$ is a prime operator. Hence we can follow the
reasoning of the proof of theorem \ref{t8-18} and only confirm  that
operator $T^\sigma$ has regular points in the upper and lower
half-planes. To see this we first note that non-negative operator
$\Lambda_\sigma^0$ has no kernel spectrum \cite{AG93} on the left
real half-axis. Then we apply Theorem 1 of \cite{AG93} (see page 149
of vol. 2 of \cite{AG93}) that gives the complete description of the
spectrum of $T^\sigma$. This theorem implies that there are regular
points of $T^\sigma$ on the left real half-axis. Since
$\rho(T^\sigma)$ is an open set we confirm the presence of non-real
regular points of $T^\sigma$ in both half-planes. Thus
\eqref{prime_Lambda} holds and $\Theta_\Lambda$ is a prime system.

 Applying  theorem \ref{12} on unitary equivalence to the
isometry $U$ defined in \eqref{e-f-phi} we obtain a triplet of
isometric operators $U_+$, $U$, and $U_-$, where
$$
U_+=U\big|_{\calH_+^\sigma},\quad U_-^*=U_+^*.
$$
This triplet of isometric operators will map the rigged Hilbert
space $\Theta_\Lambda$ is $\calH_+^\sigma \subset
L_2^\sigma[0,+\infty)\subset \calH_-^\sigma$ into another rigged
Hilbert space  $\calH_+\subset L_2^\sigma[a,+\infty)\subset
\calH_-$. Moreover, $U_+$ is an isometry from
$\calH_+^\sigma=\cD(\Lambda_\sigma^{0*})$ onto $\calH_{+}=\cD(\ti
B^*)$, and $U_-^*=U_+^*$ is an isometry from $\calH_+^\sigma$ onto
$\calH_{-}$. This is true since the operator $U$ provides the
unitary equivalence between the symmetric operators $\ti B$ and
$\Lambda_\sigma^0$.

Now we construct a system
$$
\Theta= \begin{pmatrix} \bA&K&1\cr \calH_+ \subset L_2[a,+\infty)
\subset \calH_-& &\dC\cr \end{pmatrix}
$$
where  $K=U_-K^\sigma$ and $\bA=U_-\mathbf{\Lambda}U_+^{-1}$ is a
correct ($*$)-extension of operator $T=UT^\sigma U^{-1}$ such that
$\bA\supset T\supset\ti B$. The real part $\RE\bA$ contains the
quasi-kernel $\ti B_\theta$. This construction of $\bA$ is unique
due to the theorem on the uniqueness of a ($*$)-extension for a
given quasi-kernel (see \cite{TSh1}). On the other hand, all
($*$)-extensions based on a pair $\ti B$, $\ti B_\theta$ must take
form \eqref{137} for some values of parameters $h$ and $\mu$.
Consequently, our function $V(z)$ is realized by the system $\Theta$
of the form \eqref{e8-199} and
$$V(z)=V_{\Theta_\Lambda}(z)=V_\Theta(z).$$
\end{proof}
\begin{remark}\label{r5-5}
Applying corollary \ref{unique} to the mapping $U$ defined by
\eqref{e-f-phi} we obtain that the operator $U$ in the above theorem
is unique. The uniqueness of the operator $U$ leads to an
interesting observation.
 Let  $u_k(x,\lambda)$,
$(k=1,2)$ be the solutions of the following Cauchy problems:

\centerline{$\left\{ \begin{array}{l}
 l(u_1)=\lambda u_1 \\
 u_1(a,\lambda)=0 \\
 u'_1(a,\lambda)=1 \\
 \end{array} \right., $\qquad
$\left\{ \begin{array}{l}
 l(u_2)=\lambda u_2 \\
 u_2(a,\lambda)=1 \\
 u'_2(a,\lambda)=0 \\
 \end{array} \right.. $}
\noindent Traditionally, (see \cite{Na68}) a non-decreasing function
$\sigma(\lambda)$ defined on $[0,+\infty)$ is called the
\textit{distribution function  of a self-adjoint operator} $\ti
B_\theta$ of the form \eqref{e8-193} if the formulas
\begin{equation}\label{e-f-phi-prime}
\begin{aligned}
\varphi(\lambda)&=Uf(x)=\int_a^{+\infty}f(x)u(x,\lambda)\,dx,\\
f(x)&=U^{-1}\varphi(\lambda)=\int_0^{+\infty}\varphi(\lambda)u(x,\lambda)\,d\sigma(\lambda),
\end{aligned}
\end{equation}
where $u(x,\lambda)=u_1(x,\lambda)+\theta u_2(x,\lambda)$, establish
one-to-one isometric correspondence $U$ between
$L_2^\sigma[0,+\infty)$ and $L_2[a,+\infty)$ such that the operator
$\ti B_\theta$ in \eqref{e8-193} is unitarily equivalent to the
operator $\Lambda_\sigma$ in \eqref{Lambda-sigma}. It is easily seen
that if the mapping $U$ in \eqref{e-f-phi-prime} is such that
symmetric operators $\ti B$ in \eqref{B-sym} and $\Lambda_\sigma^0$
in \eqref{Lambda-0} are unitarily equivalent w.r.t. $U$ as well,
then the mapping $U$ in theorem \ref{t8-33} is given by the formulas
\eqref{e-f-phi-prime}. Indeed, assuming that there is another
mapping $\ti U$ provided by   theorem \ref{12} on unitary
equivalence for the systems $\Theta_\Lambda$ and $\Theta$ we would
violate the uniqueness of the operator $U$, and thus $\ti U=U$.
\end{remark}

\begin{theorem}\label{t8-34}
Let $V(z)\in SL_0(R)$ satisfy the conditions of theorem \ref{t8-33}.
Then the operator $T_h$ in the conclusion of the theorem \ref{t8-33}
is accretive if and only if
\begin{equation}\label{e8-200}
    \gamma^2+\gamma\,\int_0^{\infty } \frac{d\sigma(t)}{t}+1\ge0.
\end{equation}
The operator $T_h$ is $\alpha$-sectorial for some
$\alpha\in(0,\pi/2)$ if and only if the inequality \eqref{e8-200} is
strict. In this case the exact value of angle $\alpha$ can be
calculated by the formula
\begin{equation}\label{e8-201}
 \tan\alpha=\frac{\int_0^{\infty } \frac{d\sigma(t)}{t}}{ \gamma^2+\gamma\,
\int_0^{\infty } \frac{d\sigma(t)}{t}+1}.
\end{equation}
\end{theorem}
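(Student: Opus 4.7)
The plan is to split the argument into a functional-analytic characterization of accretivity and $\alpha$-sectoriality of $T_h$ in terms of $h$ and $m_\infty(-0)$, and a calculation of these quantities from the realization of $V(z)$ provided by Theorem \ref{t8-33}. For the first half, an integration by parts using the boundary condition $y'(a)=hy(a)$ gives
\begin{equation*}
(T_h y, y) \;=\; h\,|y(a)|^2 + \int_a^{+\infty}\!\bigl(|y'(x)|^2 + q(x)|y(x)|^2\bigr)\,dx,\qquad y\in\cD(T_h).
\end{equation*}
The infimum of the ratio of the integral term to $|y(a)|^2$ over $y\in\cD(T_h)$ with $y(a)\ne 0$ equals $m_\infty(-0)$, which is exactly the content of \eqref{138}--\eqref{139} with $-m_\infty(-0)$ playing the role of the Kre\u{\i}n boundary parameter. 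Consequently, $T_h$ is accretive iff $\RE h + m_\infty(-0)\ge 0$, and the sectoriality inequality $\cot\alpha\,|\IM(T_h y,y)|\le \RE(T_h y,y)$ becomes $\cot\alpha\cdot\IM h \le \RE h + m_\infty(-0)$. Since $\IM h>0$, the sharp angle is $\tan\alpha = \IM h/(\RE h + m_\infty(-0))$.

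For the second half, the task is to express $\IM h$ and $\RE h + m_\infty(-0)$ as explicit functions of $\gamma$ and $S := \int_0^{+\infty} d\sigma(t)/t$. By Theorem \ref{t8-33}, $V(z)=V_\Theta(z)$ for unique $h$ and $\mu$, so from \eqref{1501}
\begin{equation*}
\gamma + \int_0^{+\infty}\frac{d\sigma(t)}{t-z} \;=\; \frac{(m_\infty(z)+\mu)\,\IM h}{(\mu-\RE h)\,m_\infty(z) + \mu\,\RE h - |h|^2}
\end{equation*}
identically in $z\in\cmr$. I would pass to $z\to -0$ (where both sides are real, and finite in the case $V\in SL_{01}^K(R)$), and use that $\sigma$ is the spectral distribution of the nonnegative extension $\ti B_\theta$, so that the Cauchy transform $\int_0^{+\infty} d\sigma(t)/(t-z)$ is linked to $m_\infty(z)$ by an explicit M\"obius transformation determined by $\theta$. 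After cross-multiplication and simplification one extracts two scalar relations that, with a common positive denominator $D=D(\gamma,S,\theta)$, yield
\begin{equation*}
\IM h \;=\; \frac{S}{D},\qquad \RE h + m_\infty(-0)\;=\;\frac{\gamma^{2}+\gamma S + 1}{D}.
\end{equation*}

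Inserting these two identities into the criteria from the first paragraph settles both statements: accretivity of $T_h$ is equivalent to the nonnegativity of $\gamma^2+\gamma S+1$, which is \eqref{e8-200}; strictness of that inequality yields $\alpha$-sectoriality for some $\alpha\in(0,\pi/2)$; and the quotient $\IM h/(\RE h+m_\infty(-0))$ collapses to the right-hand side of \eqref{e8-201}. The main obstacle is the derivation carried out in the second paragraph, namely identifying the precise M\"obius link between $\int d\sigma(t)/(t-z)$ and $m_\infty(z)$ and then isolating the combination $\RE h + m_\infty(-0)$ (rather than $\RE h$ alone) in the clean algebraic form above, since it is this specific combination that appears in the accretivity criterion \eqref{138}. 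The remaining case $V\in SL_0^K(R)$, i.e.\ $S=+\infty$, is treated by a separate limiting argument: \eqref{e8-200} then reduces to $\gamma\ge 0$, whereupon $V\in S_0(R)$ and Theorem \ref{t8-20} delivers accretivity of $\bA$, which transfers to the restriction $T_h\subset\bA$.
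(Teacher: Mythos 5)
Your argument is essentially correct but follows a genuinely different route from the paper's. The paper proves the theorem by invoking the criterion of \cite{T87} phrased entirely in terms of the transfer function: $T_h$ is accretive iff $1+V_\Theta(0)V_\Theta(-\infty)\ge 0$, with $\cot\alpha$ given by \eqref{e8-204}; it then computes $W_\Theta(-1)$, $V_\Theta(0)=\gamma+b$ and $V_\Theta(-\infty)=\gamma$ directly from the integral representation and reads off \eqref{e8-200}--\eqref{e8-201} by elementary algebra, never touching the boundary parameter $h$ itself. You instead use the other criterion from \cite{Tse}/\cite{T87} --- accretivity iff $\RE h+m_\infty(-0)\ge 0$, sharp angle $\tan\alpha=\IM h/(\RE h+m_\infty(-0))$, which is \eqref{138} and \eqref{e8-227} in the paper --- and then solve for $h$ explicitly from the realization. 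Your claimed identities are in fact correct: using the three relations $V_\Theta(-\infty)=\gamma$ (giving $\mu-\RE h=\IM h/\gamma$), the quasi-kernel identification $\theta=(\mu\RE h-|h|^2)/(\mu-\RE h)$ from \eqref{e8-223}--\eqref{e8-224}, and $V_\Theta(-0)=\gamma+b$, one obtains $\IM h=(\theta+m)b/(1+\gamma^2)$ and $\RE h+m=(\theta+m)(\gamma^2+\gamma b+1)/(1+\gamma^2)$, i.e.\ your $D=(1+\gamma^2)/(\theta+m)>0$, and the quotient is exactly \eqref{e8-201}. (Note the paper derives these very formulas \eqref{e8-231} only \emph{after} the theorem, using \eqref{e8-201} as input; your route reverses that order, which is legitimate and arguably more informative since it produces $h$ along the way.) Two points you should make explicit to close the argument. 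First, the ``M\"obius link'' you allude to is not enough by itself: you need the quasi-kernel relation \eqref{e8-224} as a third equation to eliminate $\mu$, and you should say so, since without it the system for $(\RE h,\IM h,\mu)$ is underdetermined. Second, your treatment of the case $\int_0^\infty d\sigma(t)/t=\infty$ only gives the ``if'' direction (accretivity of $\bA$ for $\gamma\ge 0$ via Theorem \ref{t8-20}); the ``only if'' direction and the formula $\tan\alpha=1/\gamma$ still require an argument there, e.g.\ the degenerate form of the same three relations with $\theta=-m$ as in \eqref{e8-238}--\eqref{e8-246}, or simply the paper's transfer-function criterion, which handles $b=\infty$ uniformly.
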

\begin{proof}
It was shown in \cite{T87} that for the system $\Theta$ in
\eqref{e8-199} described in the previous theorem the operator $T_h$
is accretive if and only if the function
\begin{equation}\label{e8-202}
    \begin{aligned}
V_h(z)&=-i[W_\Theta^{-1}(-1)W_\Theta(z)+I]^{-1}[W_\Theta(-1)W_\Theta(z)-I]\\
&=-i\frac{1-[(m_\infty(z)+\bar h)/(m_\infty(z)+h)][(m_\infty(-1)+
h)/(m_\infty(-1)+\bar h)]}{1+[(m_\infty(z)+\bar
h)/(m_\infty(z)+h)][(m_\infty(-1)+ h)/(m_\infty(-1)+\bar h)]},
    \end{aligned}
\end{equation}
is holomorphic in Ext$[0,+\infty)$ and satisfies the following
inequality
\begin{equation}\label{e8-203}
    1+V_h(0)\,V_h(-\infty)\ge0.
\end{equation}
Here $W_\Theta(z)$ is the transfer function of \eqref{e8-199}. It is
also shown in \cite{T87} that the operator $T_h$ is
$\alpha$-sectorial for some $\alpha\in(0,\pi/2)$ if and only if the
inequality \eqref{e8-203} is strict while  the exact value of angle
$\alpha$ can be calculated by the formula
\begin{equation}\label{e8-204}
 \cot\alpha=\frac{1+V_\Theta(0)\,V_\Theta(-\infty)}{|V_\Theta(-\infty)-V_\Theta(0)|}.
\end{equation}

According to theorem  \ref{t8-33} and equation \eqref{e5-62}
$$
W_\Theta(z)=(I-iV(z)J)(I+iV(z)J)^{-1}.
$$
By direct calculations one obtains
\begin{equation}\label{e8-216}
W_\Theta(-1)=\frac{1-i\left[\gamma+\int_0^{\infty }
\frac{d\sigma(t)}{t+1}\right]}{1+i\left[\gamma+\int_0^{\infty }
\frac{d\sigma(t)}{t+1}\right]},\;
W_\Theta^{-1}(-1)=\frac{1+i\left[\gamma+\int_0^{\infty }
\frac{d\sigma(t)}{t+1}\right]}{1-i\left[\gamma+\int_0^{\infty }
\frac{d\sigma(t)}{t+1}\right]}.
\end{equation}
Using the following notations
$$
a=\int_0^{\infty } \frac{d\sigma(t)}{t+1} \textrm{\quad and \quad}
b=\int_0^{\infty } \frac{d\sigma(t)}{t},
$$
and performing straightforward  calculations we obtain
\begin{equation}\label{e8-217}
V_h(0)=\frac{a-b}{1+ab}\textrm{\quad and \quad}
V_h(-\infty)=\frac{a-\gamma}{1+a\gamma}.
\end{equation}
Substituting \eqref{e8-217} into \eqref{e8-204} and performing the
necessary steps we get
\begin{equation}\label{e8-218}
 \cot\alpha=\frac{1+b\gamma}{b-\gamma}=\frac{ \gamma^2+\gamma\,
\int_0^{\infty } \frac{d\sigma(t)}{t}+1}{\int_0^{\infty }
\frac{d\sigma(t)}{t}}.
\end{equation}
Taking into account that $b-\gamma>0$ we combine \eqref{e8-203},
\eqref{e8-204} with \eqref{e8-218} and this completes the proof of
the theorem.
\end{proof}

\begin{corollary}\label{c8-7}
Let $V(z)\in SL_0(R)$ satisfy the conditions of  theorem
\ref{t8-33}. Then the operator $T_h$ in the conclusion of theorem
\ref{t8-33}  is accretive if and only if
\begin{equation}\label{e8-219}
     1+V(0)\,V(-\infty)\ge0.
\end{equation}
The operator $T_h$ is $\alpha$-sectorial for some
$\alpha\in(0,\pi/2)$ if and only if the inequality \eqref{e8-219} is
strict. In this case the exact value of angle $\alpha$ can be
calculated by the formula
\begin{equation}\label{e8-220}
 \tan\alpha=\frac{V(-\infty)-V(0)}{1+V(0)\,V(-\infty)}.
\end{equation}
\end{corollary}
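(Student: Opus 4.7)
The plan is to reduce the corollary directly to Theorem~\ref{t8-34} by expressing the quantities $\int_0^\infty d\sigma(t)/t$ and $\gamma$ appearing in \eqref{e8-200}--\eqref{e8-201} through the boundary values $V(0)$ and $V(-\infty)$ of the Stieltjes-like function. No new analytic input beyond reading off these two limits is required; the argument is essentially a substitution.

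From the integral representation \eqref{e8-195} I would first compute
\begin{equation*}
V(-\infty)=\lim_{z\to-\infty}V(z)=\gamma,\qquad V(0)=\gamma+\int_0^\infty\frac{d\sigma(t)}{t},
\end{equation*}
the $-\infty$ limit being justified by monotone convergence applied to the integrand $1/(t-z)$ as $z\to-\infty$ along the real axis. Setting $b:=\int_0^\infty d\sigma(t)/t$, we have $V(0)=\gamma+b$; in the subclass $SL_{01}^K(R)$ this is finite, while in $SL_0^K(R)$ it must be read in the extended-real sense $b=+\infty$ and $V(0)=+\infty$.

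Substituting into Theorem~\ref{t8-34} yields
\begin{equation*}
1+V(0)V(-\infty)=1+\gamma(\gamma+b)=\gamma^2+\gamma b+1,
\end{equation*}
which is exactly the left-hand side of \eqref{e8-200}. Hence the accretivity criterion of Theorem~\ref{t8-34} becomes \eqref{e8-219}, and strict inequality transfers to strict inequality, giving the stated $\alpha$-sectorial criterion. Similarly $V(-\infty)-V(0)=-b$, so modulo the sign/orientation convention (the absolute value in \eqref{e8-204} is consistent with $V(0)>V(-\infty)$) formula \eqref{e8-220} follows directly from \eqref{e8-201}.

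The only subtle point, and really the main obstacle, is the case $V(z)\in SL_0^K(R)$, where $b=+\infty$ and $V(0)$ is infinite, so the product $V(0)V(-\infty)$ and difference $V(-\infty)-V(0)$ must be interpreted as extended-real limits. I would handle this by truncation: replace $\sigma$ by $\sigma_N(t):=\sigma(\min(t,N))$, apply the finite-$b$ argument above, and then pass $N\to\infty$. In this regime \eqref{e8-219} reduces to $\gamma\ge0$, while $\tan\alpha\to 0$, so $T_h$ fails to be $\alpha$-sectorial for any $\alpha\in(0,\pi/2)$; this is consistent with what one obtains directly from \eqref{e8-200}--\eqref{e8-201} with $b=+\infty$.
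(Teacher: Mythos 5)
Your core reduction is exactly the paper's proof: compute $V(-\infty)=\gamma$ and $V(0)=\gamma+\int_0^\infty d\sigma(t)/t=\gamma+b$ from \eqref{e8-195}, substitute into \eqref{e8-200} and \eqref{e8-201}, and observe that $1+V(0)V(-\infty)=\gamma^2+\gamma b+1$. That part is correct, and your remark about the sign in \eqref{e8-220} is well taken: as printed the numerator $V(-\infty)-V(0)=-b<0$, whereas consistency with \eqref{e8-201} and with the absolute value in \eqref{e8-204} requires $V(0)-V(-\infty)$.

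However, your treatment of the case $b=+\infty$ (the class $SL_0^K(R)$) is wrong. From \eqref{e8-201}, $\tan\alpha=b/(\gamma^2+\gamma b+1)$, and as $b\to+\infty$ with $\gamma>0$ fixed this tends to $1/\gamma$, not to $0$; this is precisely the paper's formula \eqref{e8-235}, and $T_h$ \emph{is} $\alpha$-sectorial for every $\gamma>0$ in this regime. Sectoriality fails only at $\gamma=0$, where $\tan\alpha=b\to+\infty$, i.e.\ $\alpha\to\pi/2$ and $T_h$ is extremal. You have the interpretation inverted as well: $\tan\alpha\to 0$ would mean an arbitrarily small sectorial half-angle (the strongest form of sectoriality), while failure of sectoriality corresponds to $\cot\alpha\to 0$. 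In addition, the truncation $\sigma\mapsto\sigma_N$ changes the realized function and hence the restored operator $T_h$, so ``apply the finite-$b$ argument and let $N\to\infty$'' does not by itself say anything about the original $T_h$; the honest route is simply to read \eqref{e8-200}--\eqref{e8-201} in the extended-real sense, which gives accretivity iff $\gamma\ge 0$ and sectoriality iff $\gamma>0$, in agreement with \eqref{e8-219}--\eqref{e8-220} once $V(0)=+\infty$ is interpreted as a limit.
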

\begin{proof}
  Taking into account that
$$
V(0)=\gamma+\int_0^{\infty } \frac{d\sigma(t)}{t},
$$
$V(z)=V_\Theta(z)$, and $V_\Theta(-\infty)=\gamma$, we use
\eqref{e8-200} and \eqref{e8-201} to  obtain \eqref{e8-219} and
\eqref{e8-220}.
\end{proof}

\begin{theorem}\label{t8-35}
Let $V(z)\in S_0(R)$ and satisfy the conditions of theorem
\ref{t8-33}. Then the system $\Theta$ of the form \eqref{e8-199} is
accretive and its symmetric operator $A$ of the form \eqref{128} is
such that its Kre\u{\i}n-von Neumann extension $A_K$ of the form
\eqref{139} does not coincide with its Friedrichs extension $A_F$ of
the form \eqref{140}.
\end{theorem}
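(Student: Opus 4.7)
The plan is to prove the two assertions separately: accretiveness of the system $\Theta$, and the non-coincidence $A_K\neq A_F$ of the Kre\u{\i}n--von Neumann and Friedrichs extensions.

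First I would apply Theorem \ref{t8-33} (which is available since $S_0(R)\subset SL_0(R)$) to realize $V(z)$ uniquely by the rigged canonical system $\Theta$ of the form \eqref{e8-199}. In the proof of Theorem \ref{t8-33} the intermediate system $\Theta_\Lambda$ is shown to be prime, and $\Theta$ is constructed from $\Theta_\Lambda$ via the unitary equivalence $U$; since defect subspaces and the set $\rho(T)$ transform naturally under such equivalence, primeness is preserved and $\Theta$ is prime as well. Because $V(z)\in S_0(R)$ is a Stieltjes function and $V(z)=V_\Theta(z)$, Theorem \ref{t8-20} applied to this prime $\Theta$ immediately gives that the main operator $\bA$ is accretive, which is exactly the first assertion.

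Next I would transfer accretiveness from $\bA$ down to $T_h$. By the definition of a $(*)$-extension, $\bA\supset T_h$ and $\cD(T_h)\subset \cD(A^*)=\calH_+$; hence for every $f\in \cD(T_h)$ one has $\bA f = T_h f$, so $\RE(T_h f,f) = \RE(\bA f,f)\ge 0$, i.e., $T_h$ is accretive. By the characterization \eqref{138}, accretiveness of $T_h$ is equivalent to $\RE h\ge -m_\infty(-0)$. Since the realizing $T_h$ satisfies $\IM h\ne 0$, the parameter $h\in\dC$ is a finite non-real number, so $\RE h$ is finite; this forces $m_\infty(-0)>-\infty$.

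Finally I would read off the conclusion from the parametrization \eqref{139}--\eqref{140}: the Kre\u{\i}n--von Neumann extension $A_K$ corresponds to the finite boundary parameter $h_K=-m_\infty(-0)$, while the Friedrichs extension $A_F$ corresponds to $h_F=+\infty$. Finiteness of $-m_\infty(-0)$ therefore gives $h_K\ne h_F$, and hence $A_K\ne A_F$. The only step warranting any care is the passage from $\bA$-accretive to $T_h$-accretive, but this is immediate from $\bA\supset T_h$ and the domain inclusion $\cD(T_h)\subset\calH_+$, so I do not anticipate a substantive obstacle; the whole argument is essentially a concatenation of Theorems \ref{t8-33}, \ref{t8-20} and the known sign condition \eqref{138}.
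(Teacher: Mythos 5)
Your proposal is correct. The first half (accretiveness of $\Theta$) is exactly the paper's argument: realize $V(z)$ via Theorem \ref{t8-33}, note the system is prime, and invoke Theorem \ref{t8-20}. For the second half the paper takes a shorter, more abstract route: it simply cites the theorem from \cite{Tse} asserting that a positive symmetric operator admits a non-self-adjoint accretive extension if and only if $A_F\ne A_K$; since the restored $T_h$ is such an extension, the conclusion is immediate. You instead prove this implication concretely in the Schr\"odinger setting: you pass accretiveness from $\bA$ down to $T_h$ via $\bA\supset T_h$ and the domain inclusion $\cD(T_h)\subset\calH_+$ (a step the paper leaves implicit but which is needed even for its citation to apply), then use the criterion \eqref{138} to force $-m_\infty(-0)\le\RE h<+\infty$, so that the Kre\u{\i}n--von Neumann boundary parameter is finite while the Friedrichs one is $+\infty$, whence $A_K\ne A_F$. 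Your route buys a self-contained, explicit verification that only uses the already-quoted inequality \eqref{138} and the parametrizations \eqref{139}--\eqref{140}, at the cost of being specific to the half-line Schr\"odinger case; the paper's route is a one-line appeal to a general operator-theoretic equivalence. Both are sound, and your extra care in justifying $\RE(T_hf,f)=\RE(\bA f,f)$ on $\cD(T_h)$ is a genuine (if small) improvement in completeness over the paper's terse proof.
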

\begin{proof}
The proof of the fact that $\Theta$ is accretive directly follows
from the theorems \ref{t8-20} and \ref{t8-33}. The second part
follows from the theorem in \cite{Tse} that states that a positive
symmetric operator $A$ admits a non-self-adjoint accretive extension
$T$ if and only if $A_F$ and $A_K$ do not coincide.
\end{proof}

Below we will derive the formulas for calculation of the boundary
parameter $h$ in the restored Schr\"odinger operator $T_h$  of the
form \eqref{e8-191}. We consider two major cases.

\textbf{Case 1}. In the first case we assume that $\int_0^{\infty }
\frac{d\sigma(t)}{t}<\infty$. This means that our function $V(z)$
belongs to the class $SL_{01}^K(R)$. In what follows we denote
$$
b=\int_0^{\infty } \frac{d\sigma(t)}{t} \textrm{\quad and \quad}
m=m_\infty(-0).
$$
Suppose that $b\ge2$. Then the quadratic inequality \eqref{e8-200}
implies that for all $\gamma$ such that
\begin{equation}\label{gamma-int}
\gamma\in\left(-\infty,\frac{-b-\sqrt{b^2-4}}{2}
\right]\cup\left[\frac{-b+\sqrt{b^2-4}}{2},+\infty \right)
\end{equation}
the restored operator $T_h$ is accretive. Clearly, this operator is
extremal accretive if
$$
\gamma=\frac{-b\pm\sqrt{b^2-4}}{2}.
$$
In particular if $b=2$ then $\gamma=-1$ and the function
$$
V(z)=-1+\int_0^{\infty } \frac{d\sigma(t)}{t-z}
$$
is realized using an extremal accretive $T_h$.

Now suppose that $0<b<2$. For every $\gamma\in(-\infty,+\infty)$ the
restored operator $T_h$ will be accretive and $\alpha$-sectorial for
some $\alpha\in(0,\pi/2)$. Consider a function $V(z)$ defined by
\eqref{e8-195}. Conducting realizations of $V(z)$ by operators $T_h$
for different values of $\gamma\in(-\infty,+\infty)$ we notice that
the operator $T_h$ with the largest angle of sectorialilty occurs
when
\begin{equation}\label{e8-221}
    \gamma=-\frac{b}{2},
\end{equation}
and is found according to the formula
\begin{equation}\label{e8-222}
    \alpha=\arctan\frac{b}{1-b^2/4}.
\end{equation}
This follows from the formula \eqref{e8-201}, the fact that
$\gamma^2+\gamma\,b+1>0$ for all $\gamma$, and the formula
$$
\gamma^2+\gamma\,b+1=\left(\gamma+\frac{b}{2}\right)^2+\left(1-\frac{b^2}{4}\right).
$$
Now we will focus on the description of the parameter $h$ in the
restored operator $T_h$.

It was shown in \cite{ArTs0} that the quasi-kernel $\hat A$ of the
realizing system $\Theta$ from theorem \ref{t8-33} takes a form
\begin{equation}\label{e8-223}
\left\{\begin{array}{l}
  \widehat A y=-y^{\prime\prime}+qy \\
  y^{\prime}(a)=\eta y(a) \\
  \end{array} \right.,\;  \eta=\frac{\mu \RE h-|h|^2}{\mu-\RE h}
\end{equation}
On the other hand, since  $\sigma(t)$ is also the distribution
function of the positive self-adjoint operator,  we can conclude
that $\hat A$ equals to the operator $\ti B_\theta$ of the form
\eqref{e8-193}. This connection allows us to obtain
\begin{equation}\label{e8-224}
    \theta=\eta=\frac{\mu \RE h-|h|^2}{\mu-\RE h}.
\end{equation}
Assuming that
$$
h=x+iy
$$
we will use \eqref{e8-224} to derive the formulas for $x$ and $y$ in
terms of  $\gamma$. First, to eliminate parameter $\mu$, we notice
that \eqref{150} and \eqref{e5-62} imply
$$
W_\Theta(\lambda)= \frac{\mu -h}{\mu - \overline h}\,\,
\frac{m_\infty(\lambda)+ \overline
h}{m_\infty(\lambda)+h}=\frac{1-iV(z)}{1+iV(z)}.
$$
Passing to the limit when $z\to-\infty$ and taking into account that
$V(-\infty)=\gamma$ we obtain
$$
\frac{\mu -h}{\mu - \overline h}=\frac{1-i\gamma}{1+i\gamma}.
$$
Let us denote
\begin{equation}\label{e8-225}
   a=\frac{1-i\gamma}{1+i\gamma}.
\end{equation}
Solving \eqref{e8-225} for $\mu$ yields
$$
\mu=\frac{h-a\bar h}{1-a}.
$$
Substituting this value into \eqref{e8-224} after simplification
produces
$$
\frac{x+iy-a(x-iy)x-(x^2+y^2)(1-a)}{x+iy-a(x-iy)-x(1-a)}=\theta.
$$
After straightforward calculations targeting to represent numerator
and denominator of the last equation in standard form one obtains
the following relation
\begin{equation}\label{e8-226}
    x-\gamma\, y=\theta.
\end{equation}
It was shown in \cite{T87} that the $\alpha$-sectorialilty of the
operator $T_h$ and \eqref{e8-204} lead to
\begin{equation}\label{e8-227}
    \tan\alpha=\frac{\IM h}{\RE
    h+m_\infty(-0)}=\frac{y}{x+m_\infty(-0)}.
\end{equation}
Combining \eqref{e8-226} and \eqref{e8-227} one obtains
\begin{equation*}
    x-\gamma(x\tan\alpha+m_\infty(-0)\,\tan\alpha)=\theta,
\end{equation*}
or
$$
x=\frac{\theta+\gamma
m_\infty(-0)\,\tan\alpha}{1-\gamma\,\tan\alpha}.
$$
But $\tan\alpha$ is also determined by \eqref{e8-201}. Direct
substitution of
$$
\tan\alpha=\frac{b}{1+\gamma(\gamma+b)}
$$
into the above equation yields
$$
x=\theta+\frac{\big[\theta+m_\infty(-0)\big]b\gamma}{1+\gamma^2}.
$$
Using the short notation and finalizing calculations we get
\begin{equation}\label{e8-231}
   h=x+iy,\quad x=\theta+\frac{\gamma[\theta+m]b}{1+\gamma^2},\quad
y=\frac{[\theta+m]b}{1+\gamma^2}.
\end{equation}
At this point we can use \eqref{e8-231} to provide analytical and
graphical interpretation of the parameter $h$ in the restored
operator $T_h$. Let $$c=(\theta+m)b.$$ Again we consider three
subcases.
\begin{description}
  \item[Subcase 1] $b>2$ Using basic algebra we transform
\eqref{e8-231} into
\begin{equation}\label{e8-232}
    (x-\theta)^2+\left(y-\frac{c}{2}\right)^2=\frac{c^2}{4}.
\end{equation}
Since in this case the parameter $\gamma$ belongs to the interval in
\eqref{gamma-int},
\begin{figure}
  \begin{center}
  \includegraphics[width=80mm]{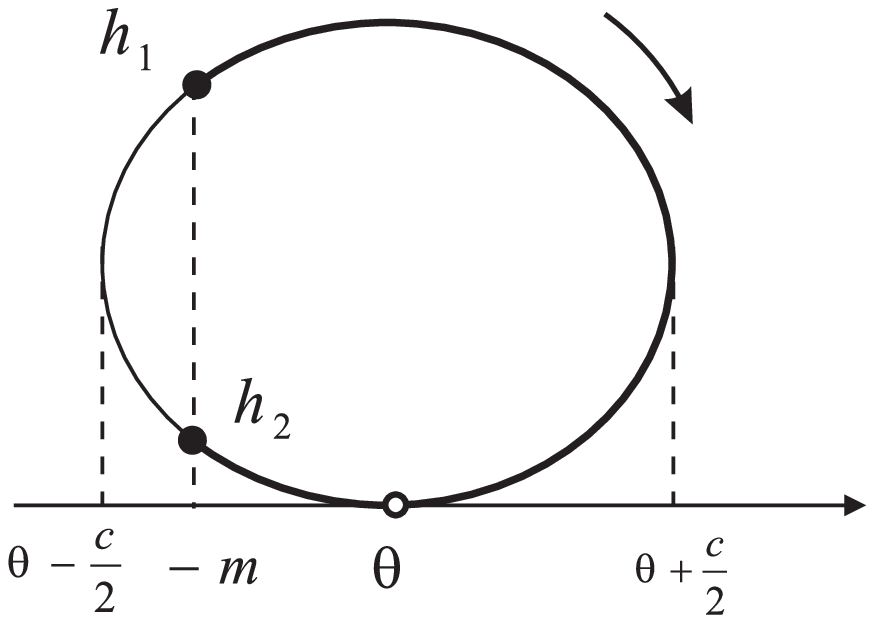}\\
  \caption{$b>2$}\label{fig8-1}
  \end{center}
\end{figure}
\begin{figure}
\begin{center}
\includegraphics[width=80mm]{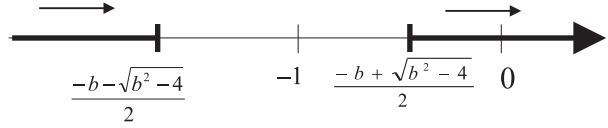}
\caption{$\gamma$ interval} \label{fig8-2}
\end{center}
\end{figure}
 we can see that $h$ traces the highlighted part
of the circle on the figure \ref{fig8-1} as $\gamma$ moves from
$-\infty$ towards $+\infty$. We also notice that the removed point
$(\theta,0)$ corresponds to the value of $\gamma=\pm\infty$ while
the points $h_1$ and $h_2$ correspond to the values
$\gamma_1=\frac{-b-\sqrt{b^2-4}}{2}$ and
$\gamma_2=\frac{-b+\sqrt{b^2-4}}{2}$, respectively (see figure
\ref{fig8-2}).

\begin{figure}
\begin{center}
\includegraphics[width=80mm]{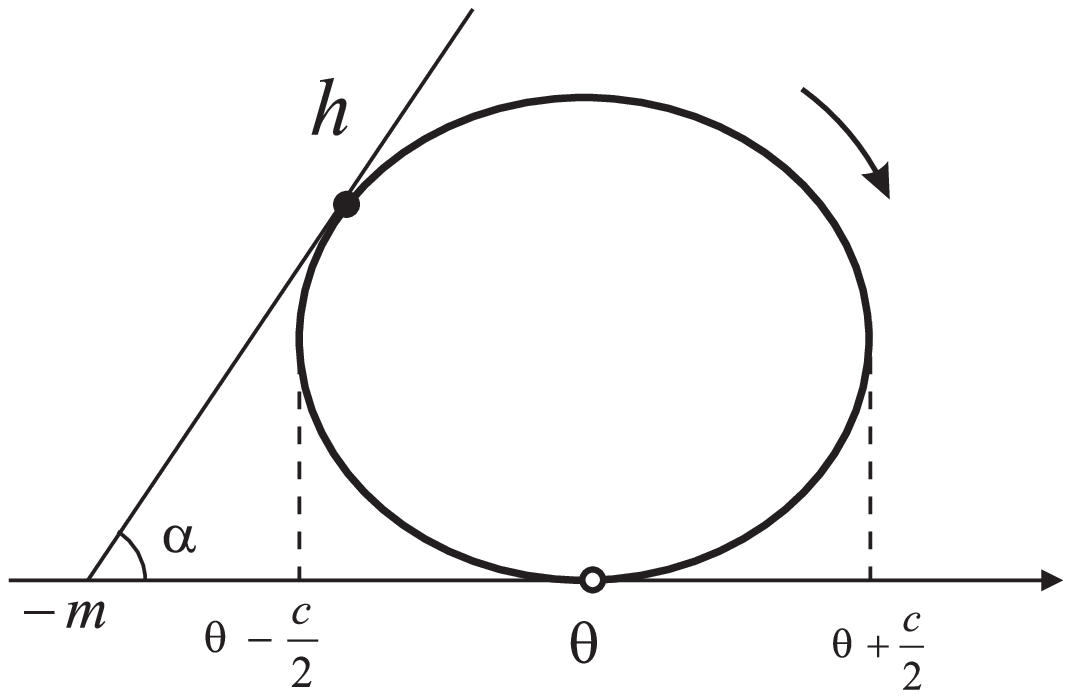}
\caption{$b<2$} \label{fig8-3}
\end{center}
\end{figure}

\item[Subcase 2] $b<2$ For every $\gamma\in(-\infty,+\infty)$ the
restored operator $T_h$ will be accretive and $\alpha$-sectorial for
some $\alpha\in(0,\pi/2)$. As we have mentioned above, the operator
$T_h$ achieves  the largest angle of sectorialilty  when
$\gamma=-\frac{b}{2}$. In this particular case \eqref{e8-231}
becomes
\begin{equation}\label{e8-233}
   h=x+iy,\quad x=\frac{\theta(4-b^2)-2b^2m}{4+b^2},\quad
y=\frac{4(\theta+m)b}{4+b^2}.
\end{equation}
The value of $h$ from \eqref{e8-233} is marked on the figure
\ref{fig8-3}.

\begin{figure}
\begin{center}
\includegraphics[width=80mm]{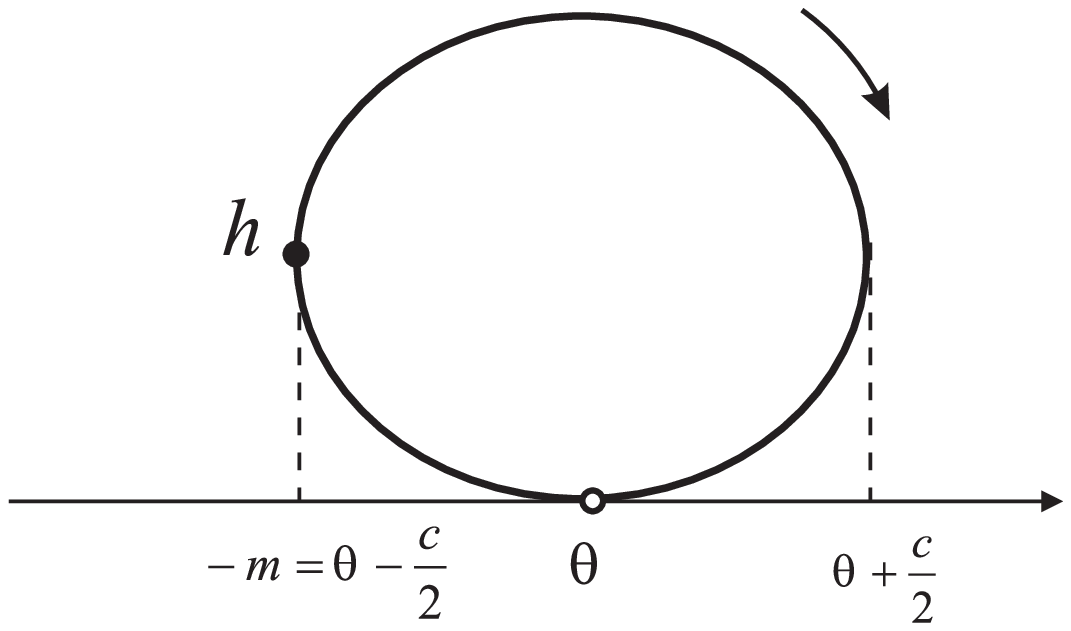}
\caption{$b=2$} \label{fig8-4}
\end{center}
\end{figure}

  \item[Subcase 3] $b=2$ The behavior of parameter $h$ in this case is depicted on the figure \ref{fig8-4}.
  It shows that in this case the function $V(z)$ can be realized using an extremal accretive
  $T_h$ when $\gamma=-1$. The value of the parameter $h$ according to \eqref{e8-231} then becomes
\begin{equation}\label{e8-234}
   h=x+iy,\quad x=-m,\quad y=\theta+m.
\end{equation}
Clockwise direction of the circle again corresponds to the change of
$\gamma$ from $-\infty$ to $+\infty$ and the marked value of $h$
occurs when $\gamma=-1$.
\end{description}

Now we consider the second case.

\textbf{Case 2}. Here we assume that $\int_0^{\infty }
\frac{d\sigma(t)}{t}=\infty$. This means that our function $V(z)$
belongs to the class $SL_0^K(R)$ and $b=\infty$. According to
theorem \ref{t8-34} and formulas \eqref{e8-200} and \eqref{e8-201},
the restored operator $T_h$ is accretive if and only if
$$\gamma\ge0,$$
and $\alpha$-sectorial if and only if $\gamma>0$. It directly
follows from \eqref{e8-201} that the exact value of the angle
$\alpha$ is then found from
\begin{equation}\label{e8-235}
    \tan\alpha=\frac{1}{\gamma}.
\end{equation}
The latter implies that the restored operator $T_h$ is extremal if
$\gamma=0$. This means that a function $V(z)\in SL_0^K(R)$ is
realized by a system with an extremal operator $T_h$ if and only if
\begin{equation}\label{e8-236}
    V(z)=\int\limits_0^\infty\frac{d\sigma(t)}{t-z}.
\end{equation}
On the other hand since $\gamma\ge0$ the function $V(z)$ is a
Stieltjes function of the class $S_{0}(R)$. Applying realization
theorems from \cite{DoTs} we conclude that $V(z)$ admits realization
by an accretive system $\Theta$ of the form (\ref{e6-3-2}) with
$\bA_R$ containing the  Krein-von Neumann extension $A_K$ as a
quasi-kernel. Here $A_K$ is defined by \eqref{139}. This yields
\begin{equation}\label{e8-237}
\theta=-m_\infty(-0)=-m.
\end{equation}
As in the beginning of the previous case we derive the formulas for
$x$ and $y$, where $h=x+iy$. Using \eqref{e8-224} and \eqref{e8-226}
leads to
\begin{equation}\label{e8-238}
\begin{cases}
    \theta=\frac{\mu x-(x^2+y^2)}{\mu-x},&\\
    x=\theta+\gamma y.&
\end{cases}
\end{equation}
Solving this system for $x$ and $y$ leads to
\begin{equation}\label{e8-239}
    x=\frac{\theta+\mu\gamma^2}{1+\gamma^2},\quad
y=\frac{(\mu-\theta)\gamma}{1+\gamma^2}.
\end{equation}
Combining \eqref{e8-238} and \eqref{e8-239} gives
\begin{equation}\label{e8-240}
    x=\frac{-m+\mu\gamma^2}{1+\gamma^2},\quad
y=\frac{(m+\mu)\gamma}{1+\gamma^2}.
\end{equation}
To proceed, we first notice that our function $V(z)$ satisfies the
conditions of theorem 4.8 of \cite{ArTs0}. Indeed, the inequality
\[
\mu \geq \frac{(\IM h)^2}{m_\infty(-0)+\RE h}+\RE h,
\]
turns into
$$
\mu=\frac{y^2}{x-m}+x,
$$
if you use $\theta=-m$ and the first equation in \eqref{e8-238}.
Applying theorem 4.8 of \cite{ArTs0} yields
\begin{equation}\label{e8-241}
\int\limits_0^\infty \frac{d\sigma(t)}{1+t^2}=\frac{\IM
h}{|\mu-h|^2} \left( \sup\limits_{y\in D(A_K)}\frac{|\mu y(a)-
y^{\prime}(a)|}{\left(\int\limits_a^\infty\left( |y(x)|^2+
|l(y)|^2\right)dx\right)^{\frac{1}{2}}}\right)^2.
\end{equation}
Taking into account that
$$
\mu y(a)- y^{\prime}(a)=(\mu+m) y(a)
$$
and  setting
\begin{equation}\label{e8-242}
    c^{1/2}=\sup\limits_{y \in D(A_K)}
\frac{|y(a)|}{\left(\int\limits_a^\infty\left(|y(x)|^2+
|l(y)|^2\right)dx\right)^{\frac{1}{2}}},
\end{equation}
we obtain
\begin{equation}\label{e8-243}
    \frac{\IM
h}{|\mu-h|^2} (\mu+m)^2\, c=\int\limits_0^\infty
\frac{d\sigma(t)}{1+t^2}.
\end{equation}
\begin{figure}
  \begin{center}
  \includegraphics[width=80mm]{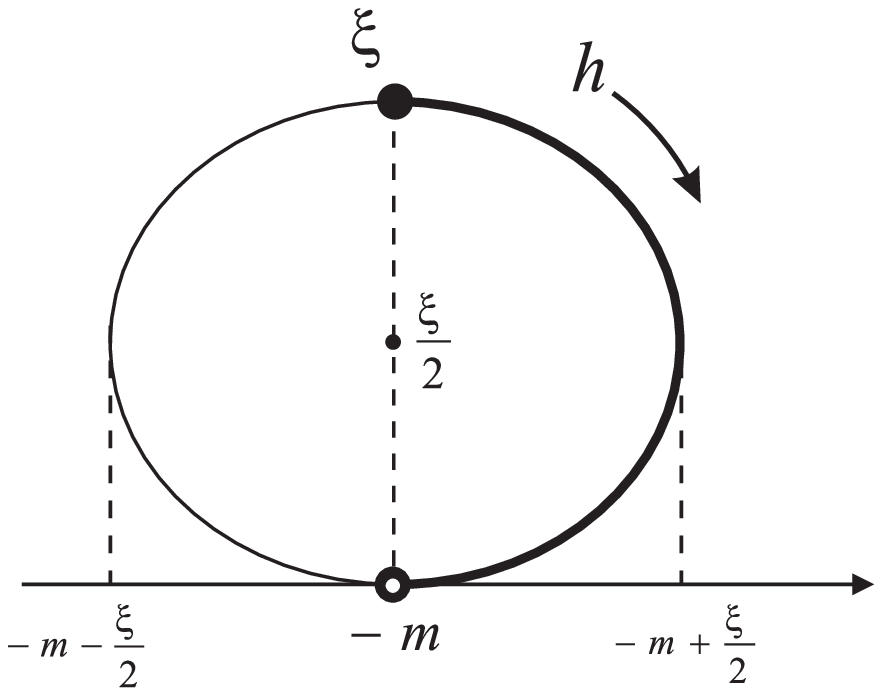}\\
  \caption{$b=\infty$}\label{fig8-5}
  \end{center}
\end{figure}
\begin{figure}
\begin{center}
\includegraphics[width=80mm]{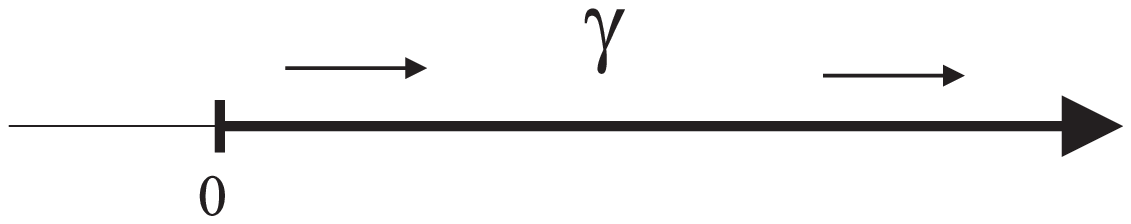}
\caption{$\gamma\ge0$} \label{fig8-6}
\end{center}
\end{figure}
Considering that $\IM h=y$ and combining \eqref{e8-243} with
\eqref{e8-240} we use straightforward calculations to get
$$
\mu=-m+\left(\frac{1}{\gamma\,c}\right)\,\int\limits_0^\infty
\frac{d\sigma(t)}{1+t^2}.
$$
Let
\begin{equation}\label{e8-244}
    \xi=\frac{1}{c}\int_0^\infty\frac{d\sigma(t)}{1+t^2}.
\end{equation}
Then the last equation becomes
\begin{equation}\label{e8-245}
\mu=-m+\frac{\xi}{\gamma}.
\end{equation}
Applying \eqref{e8-245} on \eqref{e8-240} yields
\begin{equation}\label{e8-246}
x=-m+\frac{\gamma\,\xi}{1+\gamma^2},\quad y=\frac{\xi}{1+\gamma^2},
\qquad \gamma\ge0.
\end{equation}
Following the previous case approach we transform \eqref{e8-246}
into
\begin{equation}\label{e8-247}
    (x+m)^2+\left(y-\frac{\xi}{2}\right)^2=\frac{\xi^2}{4}.
\end{equation}
The connection between the parameters $\gamma$ and $h$ in the
accretive restored operator $T_h$ is depicted in figures
\ref{fig8-5} and \ref{fig8-6}. As we can see $h$ traces the
highlighted part of the circle clockwise on the figure \ref{fig8-5}
as $\gamma$ moves from $0$ towards $+\infty$.

As we mentioned earlier  the restored operator $T_h$ is extremal if
$\gamma=0$. In this case formulas \eqref{e8-246} become
\begin{equation}\label{e8-247'}
x=-m,\quad y=\xi, \qquad \gamma=0,
\end{equation}
where $\xi$ is defined by \eqref{e8-244}.

\section{Realizing systems with Schr\"odinger operator}\label{s-6}

Now once we described all the possible outcomes for the restored
accretive operator $T_h$, we can concentrate on the main operator
$\bA$ of the system \eqref{e8-199}. We recall that $\bA$ is defined
by formulas \eqref{137} and beside the parameter $h$ above contains
also parameter $\mu$. We will obtain the behavior  of $\mu$ in terms
of the components of our function $V(z)$ the same way we treated the
parameter $h$. As before we consider two major cases dividing them
into subcases when necessary.

\textbf{Case 1}. Assume that $b=\int_0^\infty\frac{d\sigma(t)}{t}
<\infty$. In this case our function $V(z)$ belongs to the class
$SL^K_{01}(R)$. First we will obtain the representation of $\mu$ in
terms of $x$ and $y$, where $h=x+iy$. We recall that
$$
\mu=\frac{h-a\bar h}{1-a},
$$
where $a$ is defined by \eqref{e8-225}. By direct computations we
derive that
$$
a=\frac{1-\gamma^2}{1+\gamma^2}-\frac{2\gamma}{1+\gamma^2}i,\quad
1-a=\frac{2\gamma^2}{1+\gamma^2}+\frac{2\gamma}{1+\gamma^2}i,
$$
and
$$
h-a\bar
h=\left(\frac{2\gamma^2}{1+\gamma^2}x+\frac{2\gamma}{1+\gamma^2}y\right)+
\left(\frac{2}{1+\gamma^2}y+\frac{2\gamma}{1+\gamma^2}x \right)i.
$$
Plugging the last two equations into the formula for $\mu$ above and
simplifying we obtain
\begin{equation}\label{e8-248}
    \mu=x+\frac{1}{\gamma}\,y.
\end{equation}
We recall that during the present case $x$ and $y$ parts of $h$ are
described by the formulas \eqref{e8-231}.

Once again we elaborate in three subcases.
\begin{description}
  \item[Subcase 1] $b>2$ As we have shown this above, the formulas \eqref{e8-231}
can be transformed into equation of the circle \eqref{e8-232}. In
this case the parameter $\gamma$ belongs to the interval in
\eqref{gamma-int}, the accretive operator $T_h$ corresponds to the
values of $h$ shown in the bold part of the circle on the figure
\ref{fig8-1} as $\gamma$ moves from $-\infty$ towards $+\infty$.

Substituting the expressions for $x$ and $y$ from \eqref{e8-231}
into \eqref{e8-248} and simplifying we get
\begin{equation}\label{e8-250}
    \mu=\theta+\frac{(\theta+m)b}{\gamma}.
\end{equation}
The connection between values of $\gamma$ and $\mu$ is depicted on
the figure \ref{fig8-7}.
\begin{figure}
\begin{center}
\includegraphics[width=100mm]{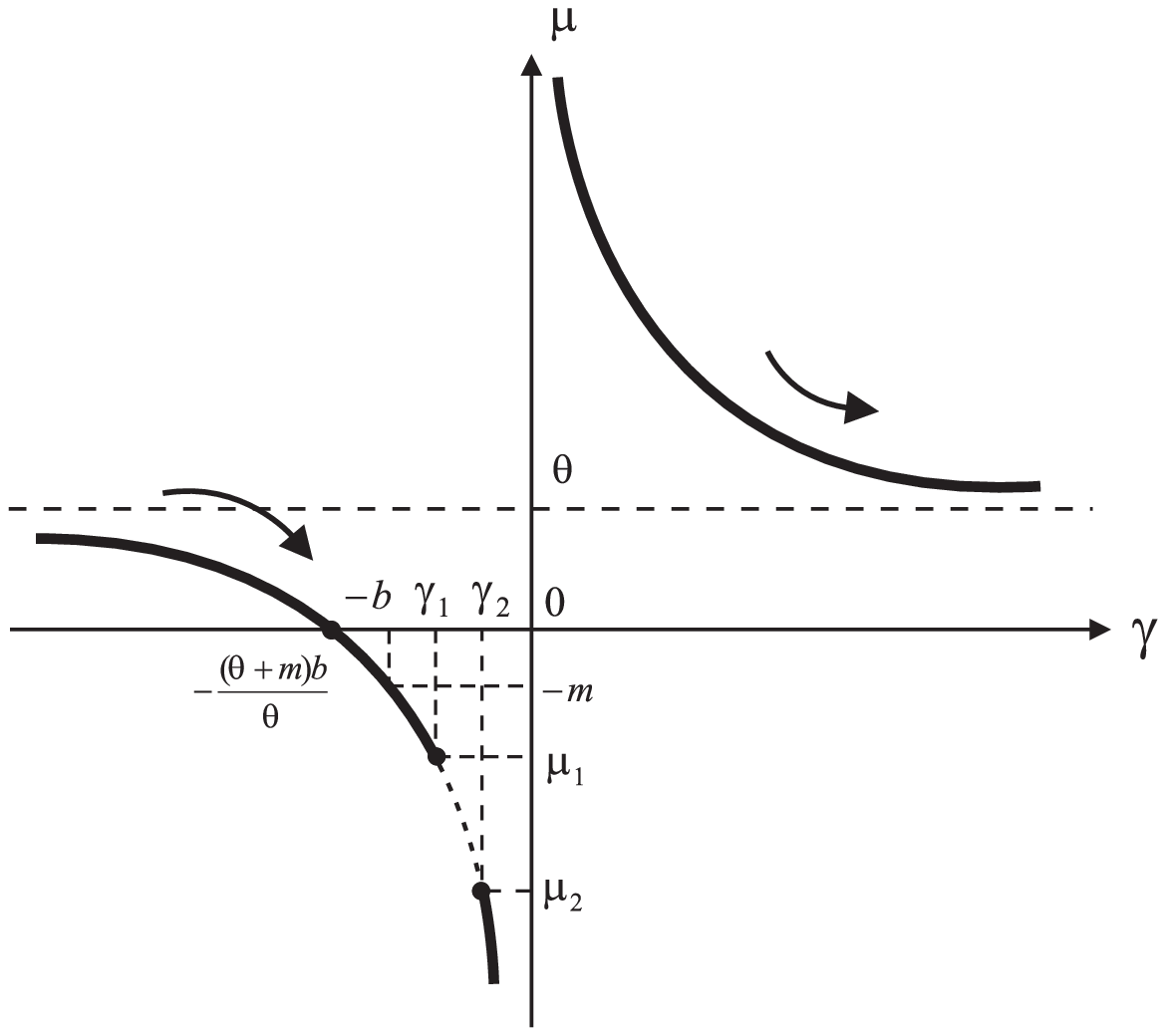}
\caption{$b>2$} \label{fig8-7}
\end{center}
\end{figure}
We note that $\mu=0$ when $\gamma=-\frac{(\theta+m)b}{\theta}$.
Also, the endpoints
$$
\gamma_1=\frac{-b-\sqrt{b^2-4}}{2}\quad\textrm{ and }\quad
\gamma_2=\frac{-b+\sqrt{b^2-4}}{2}
$$
of $\gamma$-interval \eqref{gamma-int} are responsible for the
$\mu$-values
$$
\mu_1=\theta+\frac{(\theta+m)b}{\gamma_1}\quad\textrm{ and }\quad
\mu_2=\theta+\frac{(\theta+m)b}{\gamma_2}.
$$
The values of $\mu$ that are acceptable parameters of operator $\bA$
of the restored system make the bold part of the hyperbola on the
figure \ref{fig8-7}. It follows from theorem \ref{t8-20} that the
operator $\bA$ of the form \eqref{137} is accretive if and only if
$\gamma\ge0$ and thus $\mu$ sweeps the right branch on the
hyperbola. We note that figure \ref{fig8-7} shows the case when
$-m<0$, $\theta>0$, and $\theta>-m$. Other possible cases, such as
($-m<0$, $\theta<0$, $\theta>-m$), ($-m<0$, $\theta=0$), and ($m=0$,
$\theta>0$) require corresponding adjustments to the graph shown in
the picture \ref{fig8-7}.

\item[Subcase 2] $b<2$ For every $\gamma\in(-\infty,+\infty)$ the
restored operator $T_h$ will be accretive and $\alpha$-sectorial for
some $\alpha\in(0,\pi/2)$. As we have mentioned above, the operator
$T_h$ achieves  the largest angle of sectorialilty  when
$\gamma=-\frac{b}{2}$. In this particular case \eqref{e8-231}
becomes
\begin{equation*}
    h=x+iy,\quad    x=\frac{\theta(4-b^2)-2b^2m}{4+b^2},\quad
y=\frac{4(\theta+m)b}{4+b^2}.
\end{equation*}
Substituting $\gamma=b/2$  into \eqref{e8-248}  we obtain
\begin{equation}\label{e8-251}
    \mu=-(\theta+2m).
\end{equation}
This value of $\mu$ from \eqref{e8-251} is marked on the figure
\ref{fig8-8}. The corresponding operator $\bA$ of the realizing
system is based on these values of parameters $h$ and $\mu$.
\begin{figure}
\begin{center}
\includegraphics[width=100mm]{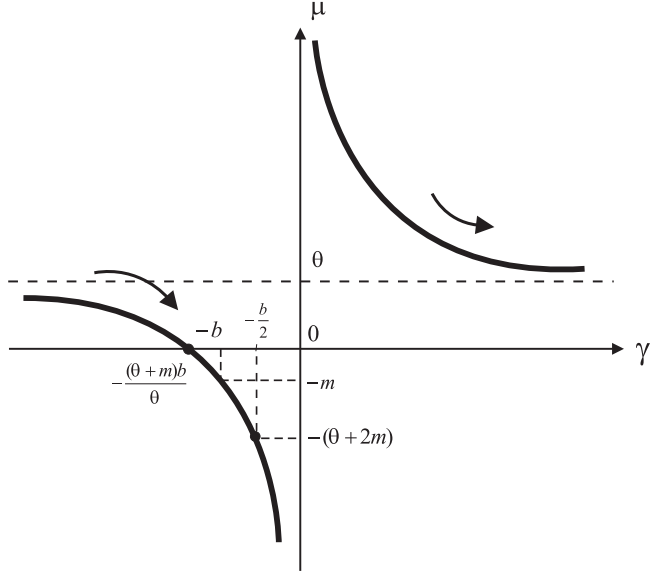}
\caption{$b<2$ and $b=2$} \label{fig8-8}
\end{center}
\end{figure}

  \item[Subcase 3] $b=2$ The behavior of parameter $\mu$ in this case is also shown on the figure \ref{fig8-8}.
  It was shown above that in this case the function $V(z)$ can be realized using an extremal accretive
  $T_h$ when $\gamma=-1$. The values of the parameters $h$ and $\mu$  then become
\begin{equation*}
   h=x+iy,\quad x=-m,\quad y=\theta+m,\quad \mu=-(\theta+2m).
\end{equation*}
The value of $\mu$ above is marked on the left branch of the
hyperbola and occurs when $\gamma=-1=-b/2$.
\end{description}

\textbf{Case 2}. Again we assume that $\int_0^{\infty}
\frac{d\sigma(t)}{t}=\infty$. Hence $V(z)\in SL_0^K(R)$ and
$b=\infty$. As we mentioned above the restored operator $T_h$ is
accretive if and only if $\gamma\ge0$ and $\alpha$-sectorial if and
only if $\gamma>0$. It is extremal if $\gamma=0$. The values of $x$,
$y$, and $\mu$ were already calculated and are given in
\eqref{e8-246} and \eqref{e8-245}, respectively. That is
$$
x=-m+\frac{\gamma\,\xi}{1+\gamma^2},\quad
y=\frac{\xi}{1+\gamma^2},\quad  \mu=-m+\frac{\xi}{\gamma},\qquad
\gamma\ge0.
$$
where $\xi$ is defined in \eqref{e8-244}. Figure \ref{fig8-9} gives
graphical representation of this case. Only the right bold branch of
hyperbola shows the values of $\mu$ in the case $b=\infty$. If $m=0$
then
$$
\mu=\frac{\xi}{\gamma}
$$
and the graph should be adjusted accordingly.
\begin{figure}
\begin{center}
\includegraphics[width=100mm]{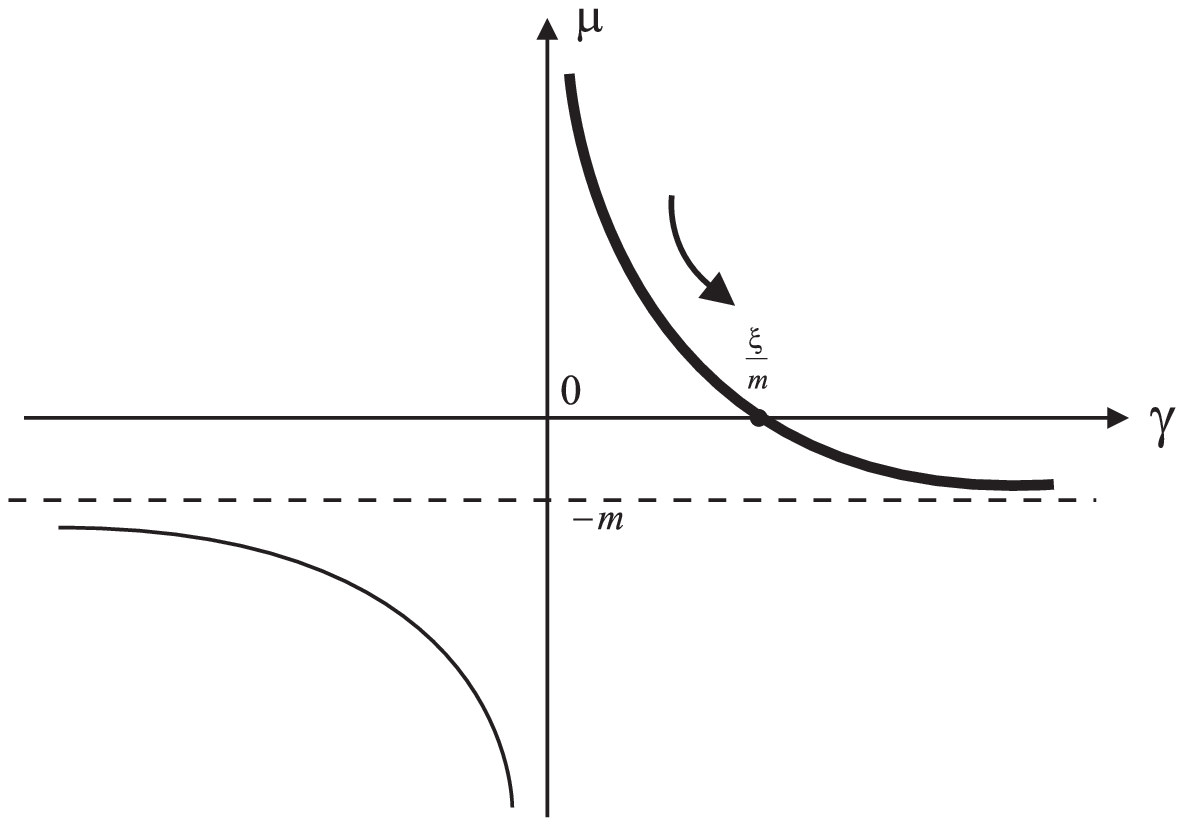}
\caption{$b=\infty$} \label{fig8-9}
\end{center}
\end{figure}

In the case when $\gamma=0$ and $T_h$ is extremal we have
\begin{equation}\label{e8-252}
    x=-m,\quad y=\xi,\quad \mu=\infty,\quad h=-m+i\xi,
\end{equation}
and according to \eqref{137} we have
\begin{equation}\label{e8-253}
    \bA y=-y''+q(x)y+[(-m+i\xi)y(a)-y'(a)]\delta(x-a),
\end{equation}
that is the main operator of the realizing system.

\subsection*{Example}
We conclude this paper with simple illustration. Consider  a
function
\begin{equation}\label{e-ex1}
    V(z)=\frac{i}{\sqrt z}.
\end{equation}
A direct check confirms that $V(z)$ is a Stieltjes function. It was
shown in \cite{Na68} (see pp. 140-142) that the inversion formula
\begin{equation}\label{e-ex2}
    \sigma(\lambda)=C+\lim_{y\to0}\frac{1}{\pi}\int_0^\lambda\IM\left(\frac{i}{\sqrt{}x+iy}\right)\,dx
\end{equation}
describes the distribution function for a self-adjoint operator
$$
\left\{ \begin{array}{l}
\ti B_0y=-y^{\prime\prime} \\
 y'(0)=0. \\
 \end{array} \right.
$$
The corresponding to $\ti B_0$ symmetric operator is
\begin{equation}\label{tiB0}
\left\{ \begin{array}{l}
  B_0y=-y^{\prime\prime} \\
 y(0)=y'(0)=0. \\
 \end{array} \right.
\end{equation}
 It was also shown
in \cite{Na68} that $\sigma(\lambda)=0$ for $\lambda\le0$ and
\begin{equation}\label{e-ex3}
    \sigma'(\lambda)=\frac{1}{\pi\sqrt\lambda}\textrm{\ \  for\ \
    }\lambda>0.
\end{equation}
By direct calculations one can confirm that
$$
V(z)=\int_0^\infty\frac{d\sigma(t)}{t-z}=\frac{i}{\sqrt z},
$$
and that
$$
\int_0^\infty\frac{d\sigma(t)}{t}=\int_0^\infty\frac{dt}{\pi
t^{3/2}}=\infty.
$$
It is also clear that the constant term in the integral
representation \eqref{e8-94} is zero, i.e. $\gamma=0$.

Let us assume that $\sigma(t)$ satisfies our definition of spectral
distribution function of the pair $B_0$, $\ti B_0$ given in the
section \ref{s-5}. Operating under this assumption, we proceed to
restore parameters $h$ and $\mu$ and apply formulas \eqref{e8-246}
for the values $\gamma=0$ and $m=-\theta=0$. This yields $x=0$. To
obtain $y$ we first find the value of
$$
\int_0^\infty\frac{d\sigma(t)}{1+t^2}=\frac{1}{\sqrt2},
$$
and then use formula \eqref{e8-242} to get the value of $c$. This
yields $c=1/\sqrt2$. Consequently,
$$
\xi=\frac{1}{c}\int_0^\infty\frac{d\sigma(t)}{1+t^2}=1,
$$
and hence $h=yi=i$. From \eqref{e8-245} we have that $\mu=\infty$
and \eqref{e8-253} becomes
\begin{equation}\label{e-ex4}
    \bA\, y=-y''+[iy(0)-y'(0)]\delta(x).
\end{equation}
The operator $T_h$ in this case is
$$
\left\{ \begin{array}{l}
 T_h y=-y^{\prime\prime} \\
 y'(0)=iy(0). \\
 \end{array} \right.
$$
 The channel vector $g$ of the
form \eqref{146} then equals
\begin{equation}\label{e-ex5}
    g=\delta(x),
\end{equation}
satisfying
$$
\IM\bA=\frac{\bA - \bA^*}{2i}=KK^*=(.,g)g,
$$
 and channel operator $Kc=cg$, ($c\in\dC$) with
\begin{equation}\label{e-ex6}
    K^*y=(y,g)=y(0).
\end{equation}
The real part of $\bA$
$$
\RE\bA\, y=-y''-y'(0)\delta(x)
$$
contains the self-adjoint quasi-kernel
$$
\left\{ \begin{array}{l}
 \widehat A y=-y^{\prime\prime} \\
 y'(0)=0. \\
 \end{array} \right.
$$
 A system of the Liv\u sic type  with Schr\"odinger
operator of the form \eqref{e8-199} that realizes $V(z)$ can now be
written as
$$
\Theta= \begin{pmatrix} \bA&K&1\cr \calH_+ \subset L_2[a,+\infty)
\subset \calH_-& &\dC\cr \end{pmatrix}.
$$
where $\bA$ and $K$ are defined above. Now we can back up our
assumption on $\sigma(t)$ to be the spectral distribution function
of the pair $B_0$, $\ti B_0$. Indeed, calculating the function
$V_\Theta(z)$ for the system $\Theta$ above directly via formula
\eqref{1501} with $\mu=\infty$ and comparing the result to $V(z)$
gives the exact value of $h=i$. Using the reasoning  of remark
\ref{r5-5} we confirm that $\sigma(t)$ is the spectral distribution
function of the pair $B_0$, $\ti B_0$.

\begin{remark}
All the derivations above can be repeated for a Stieltjes like
function
$$V(z)=\gamma+\frac{i}{\sqrt z},\qquad -\infty<\gamma<+\infty,\;\gamma\ne0$$
with very minor changes. In this case the restored values for $h$
and $\mu$ are described as follows:
$$
h=x+iy,\quad x=\frac{\gamma}{1+\gamma^2},\quad
y=\frac{1}{1+\gamma^2},\quad \mu=\frac{1}{\gamma}.
$$
The dynamics of changing $h$ according to changing $\gamma$ is
depicted on the figure \ref{fig8-5} where the circle has the center
at the point $i/2$ and radius of $1/2$. The behavior of $\mu$ is
described by a hyperbola $\mu=1/\gamma$ (see figure \ref{fig8-9}
with $\theta=0$). In the case when $\gamma>0$ our function becomes
Stieltjes and the restored system $\Theta$ is accretive. The
operators $\bA$ and $K$ of the restored system are given according
to the formulas \eqref{137} and \eqref{148}, respectively.
\end{remark}


\end{document}